\numberwithin{equation}{section}
\newcommand{\norm}[1]{\left\lVert#1\right\rVert}
\theoremstyle{plain}
\newtheorem{theorem}{Theorem}[section]
\newtheorem{lemma}{Lemma}[section]
\newtheorem{corollary}{Corollary}[section]
\theoremstyle{definition}
\newtheorem{definition}{Definition}[section]
\newtheorem{assumption}{Assumption}[section]
\theoremstyle{remark}
\newtheorem{rk}{Remark}[section]
\let\expandafter\oldproof\csname\string\proof\endcsname
\let\oldendproof\endproof
\renewenvironment{proof}[1][\proofname]{%
  \oldproof[\noindent\textbf{#1.} ]%
}{\oldendproof}
\newcommand{\1}{\mathds{1}}
\newcommand{\mcH}{\mathcal{H}}
\newcommand{\miY}{X_{1,\alpha,c}^f(s)}
\newcommand{\modH}{\mathcal{H}_{1,\alpha_t,c}^f}
\newcommand{\modX}{X_{1,\bm{\alpha},c}^f}
\newcommand{\modP}{P_{1,\bm{\alpha},c}^f}
\newcommand{\modpi}{\mu_{1,\alpha_t,c}^f}
\newcommand{\modM}{M_{1,\alpha_t,c}^f}
\newcommand{\modZ}{Z_{1,\alpha_t,c}^f}
\newcommand{\modcH}{\mathcal{H}_{1,\alpha,c}^f}
\newcommand{\modcX}{X_{1,\alpha,c}^f}
\newcommand{\modcP}{P_{1,\alpha,c}^f}
\newcommand{\modcpi}{\mu_{1,\alpha,c}^f}
\newcommand{\modcM}{M_{1,\alpha,c}^f}
\newcommand{\modcZ}{Z_{1,\alpha,c}^f}
\newcommand{\mP}{\mathbb{P}}
\newcommand{\be}{\begin{equation}}
\newcommand{\ee}{\end{equation}}
\newcommand{\by}{\begin{eqnarray*}}
\newcommand{\ey}{\end{eqnarray*}}
\renewcommand{\leq}{\leqslant}
\renewcommand{\geq}{\geqslant}
\definecolor{dark-red}{rgb}{0.4,0.15,0.15}
\definecolor{dark-blue}{rgb}{0.15,0.15,0.4}
\definecolor{medium-blue}{rgb}{0,0,0.5}
\begin{document}
%\setstretch{1.3}
\title[Improved annealing for sampling from multimodal distributions via landscape modification]{Improved annealing for sampling from multimodal distributions via landscape modification}
\author{Michael C.H. Choi and Jing Zhang}
\address{Yale-NUS College, Singapore and School of Data Science, The Chinese University of Hong Kong, Shenzhen and Shenzhen Institute of Artificial Intelligence and Robotics for Society, Guangdong, 518172, P.R. China}
\email{michael.choi@yale-nus.edu.sg, jingzhang1@link.cuhk.edu.cn }

\date{\today}
\maketitle

\begin{abstract}
	Given a target distribution $\mu \propto e^{-\mcH}$ to sample from with Hamiltonian $\mcH$ on a finite state space, in this paper we propose and analyze new Metropolis-Hastings sampling algorithms that target an alternative distribution $\mu^f_{1,\alpha,c} \propto e^{-\mcH^{f}_{1,\alpha,c}}$, where $\modcH$ is a landscape-modified Hamiltonian which we introduce explicitly. The advantage of the Metropolis dynamics which targets $\modcpi$ is that it enjoys reduced critical height described by the threshold parameter $c$, function $f$, and a penalty parameter $\alpha \geq 0$ that controls the state-dependent effect.
	
	First, we identify a convergence-bias tradeoff phenomenon: a larger value of $\alpha$ gives faster convergence yet a larger bias from $\mu$. To mitigate this bias, in the case of fixed $\alpha$ we propose a self-normalized estimator that corrects for the bias and prove asymptotic convergence results and Chernoff-type bound of the proposed estimator.
	
	Next, we consider the case of annealing the penalty parameter $\alpha$ so that the state-dependent effect diminishes to zero in the long run. We prove strong ergodicity
	and bounds on the total variation mixing time of the resulting non-homogeneous chain subject to appropriate assumptions on the decay of $\alpha$. 
	
	Finally, we illustrate the proposed algorithms by comparing their mixing times with the original Metropolis dynamics on a variety of statistical physics models including the ferromagnetic Ising model on the hypercube or the complete graph and the $q$-state Potts model on the two-dimensional torus. In these cases, the mixing times of the classical Glauber dynamics are at least exponential in the system size as the critical height grows at least linearly with the size, while the proposed annealing algorithm, with appropriate choice of $f$, $c$ and annealing schedule on $\alpha$, mixes rapidly with at most polynomial dependence on the size. The crux of the proof harnesses on the important observation that the reduced critical height can be bounded independently of the size that consequently gives rise to rapid mixing.
	
	We emphasize that the techniques developed in this paper are not only limited to Metropolis-Hastings or Glauber dynamics, and are broadly applicable to other sampling algorithms.
	\smallskip
	
 	\noindent \textbf{AMS 2010 subject classifications}: 60J27, 60J28, 82B20, 82B80
	
	\noindent \textbf{Keywords}: Metropolis-Hastings; simulated annealing; Glauber dynamics; landscape modification; Potts model; spectral gap; self-normalization; Chernoff bound
\end{abstract}

\tableofcontents

%\newpage

\section{Introduction}

Sampling lies at the heart of modern computational physics and Bayesian statistics. Often we would like to generate samples efficiently from a given target distribution $\mu \propto e^{-\mcH}$ with Hamiltonian $\mcH$ on a finite state space $\mathcal{X}$, without knowing the normalization constant or the partition function of $\mu$. In this setting, the classical Metropolis-Hastings (MH) algorithm offers a way to construct Markov chains that converge to $\mu$ in the long run: a proposal chain is utilized to generate a possible move, followed by either accepting or rejecting the proposal move based on a ratio of the target $\mu$. The resulting MH chain is reversible with respect to $\mu$, and various results have been developed concerning its long-time convergence. We refer readers to \cite{RR04,BGJM11,D09} for further pointers in the literature on the MH algorithm.

While the MH chain is known to mix quickly on unimodal target \cite{JS18}, in reality, the target distribution is often multimodal and high-dimensional, and as much the MH chain can possibly get stuck at a local mode that consequently leads to torpid mixing. For instance, in Section \ref{sec:apps} we investigate the ferromagnetic Ising model on the $n$-dimensional hypercube. The total variation mixing time of the classical MH algorithm in this model is at least exponential in the number of vertices. It is, therefore, crucial to develop acceleration techniques to tackle the challenges arising from sampling multimodal $\mu$ and to possibly improve the convergence of Markov chain Monte Carlo algorithms. A wide variety of tools and algorithms have been suggested to overcome this bottleneck, for example, the equi-energy sampler \cite{KZW06,AJDD08,EL20}, simulated tempering or replica exchange methods \cite{BR16,MZ03,WSH09}, and more recently non-reversible Markovian Monte Carlo algorithms \cite{Bie16,Hwang93,Hwang05}, to name but a few.

The core of this paper centers around a technique that we call \textit{landscape modification}. The high-level idea is to run algorithms on an alternative landscape with reduced critical height that hopefully accelerate the convergence of the Markov chain. The starting point of this work originates from \cite{FQG97}, in which the authors therein analyze a variant of the overdamped Langevin diffusion with state-dependent diffusion coefficient for simulated annealing. In \cite{C20KSA}, it is realized that the state-dependent effect can be effectively cast to kinetic simulated annealing by simply altering the target Hamiltonian function to optimize, and in \cite{C21} the same idea is applied in the discrete setting. 

While the focus of the previous papers \cite{C20KSA,C21} are on global optimization, in this paper we propose and analyze new MH sampling algorithms that target an alternative distribution $\mu^f_{1,\alpha,c} \propto e^{-\mcH^{f}_{1,\alpha,c}}$, where $\modcH$ is a landscape-modified Hamiltonian which we shall define explicitly in Section \ref{sec2}. One significant benefit of the Metropolis dynamics which targets $\modcpi$ is that it enjoys reduced critical height which depends on the threshold parameter $c$, function $f$ and a penalty parameter $\alpha \geq 0$ that controls the state-dependent effect. 

We summarize the major contributions of this paper as follows:

\begin{itemize}
	\item \textbf{Propose and analyze a self-normalized estimator in the case of fixed penalty}:
	To mitigate the bias of sampling from $\mu^f_{1,\alpha,c}$ instead of $\mu$, we propose a self-normalized estimator that correct for the bias and prove both asymptotic and non-asymptotic convergence results of the proposed estimator.
	
	\item \textbf{Propose and analyze an annealing algorithm on the penalty parameter}:
	The second algorithm that we propose involves annealing the penalty parameter $\alpha$. We prove strong ergodicity and bounds on the total variation mixing time of the resulting non-homogeneous Markov chain subject to appropriate assumptions on the decay of $\alpha$. 
	
	\item \textbf{Rapid mixing of the proposed annealing algorithm on the ferromagnetic Ising model on complete graph or hypercube and the $q$-state Potts model on the two-dimensional torus}:
	We illustrate the benefits of utilizing landscape modification in the ferromagnetic Ising model on the hypercube or the complete graph and the $q$-state Potts model on the two-dimensional torus. In these cases, the mixing times of the classical Glauber dynamics are at least exponential in the system size as the critical height grows at least linearly with the number of vertices, while the proposed annealing algorithm, with appropriate choice of $f$, $c$ and annealing schedule on $\alpha$, mixes rapidly with at most polynomial dependence on the system size. The crux of the proof relies on the observation that the reduced critical height can be bounded independently of the size that consequently leads to rapid mixing.
\end{itemize}

We stress that the techniques developed in this paper are not only limited to MH, and are in fact broadly applicable to other sampling algorithms by simply altering the target distribution from $\mu$ to $\mu^f_{1,\alpha,c}$, followed by either annealing the penalty parameter or to correct the bias via self-normalization.

The rest of this paper is organized as follows. In Section \ref{subsec:notations}, we fix some commonly-used notations throughout the paper. In Section \ref{sec2}, we first recall the classical MH and the notion of critical or communication height, followed by defining the landscape-modified Hamiltonian $\modH$ and the MH chain on the modified landscape. We conclude the Section by calculating explicitly $\modH$ when $f$ is a linear, quadratic or exponential function. We then proceed to bound the spectral gap with reduced critical height of the proposed chain in Section \ref{sec:spectralgapreduced}. In Section \ref{sec:constantpenalty}, we consider the case of constant penalty and introduce a self-normalized estimator. We prove related limit theorems as well as non-asymptotic Chernoff bounds of the estimator. In Section \ref{sec:timedpenalty}, we investigate the situation with time-dependent penalty and offer bounds on the total variation mixing time. Finally, we illustrate the model-independent results obtained in previous Sections in the ferromagnetic Ising model on hypercube or complete graph and the $q$-state Potts model on the two-dimensional torus in Section \ref{sec:apps}.

\subsection{Notations}\label{subsec:notations}

Throughout this paper, we adopt the following notations. For $x,y \in \mathbb{R}$, we write $x_+ = \max\{x,0\}$ to denote the non-negative part of $x$, and $x \wedge y = \min\{x,y\}$. We also write $\1_{A}$ to be the indicator function of the set $A$.

In order to compare the efficiency of algorithms, we now introduce a few common asymptotic notations. For two functions $g_1, g_2 : \mathbb{R} \to \mathbb{R}$, we say that $g_1 = \mathcal{O}(g_2)$ if there exists a constant $C > 0$ such that for sufficiently large $x$, we have $|g_1(x)| \leq C g_2(x)$. On the other hand, we use $\Omega$ to denote the tight lower bound, that is, we say that  $g_1 = \Omega(g_2)$ if there exists a constant $C > 0$ such that for sufficiently large $x$, we have $|g_1(x)| \geq C g_2(x)$. The $\ell^{\infty}$-norm of a bounded $g$ is defined as $\|g\|_{\infty}=\sup_x \{|g(x)|\}$.

%In order to compare efficiency of algorithms, we now introduce some asymptotic notations, $\mathcal{O}$ and $o$ are used as a tight  and loose upper-bound on the growth of an algorithm, respectively. That is, for two functions $g_1, g_2 : \mathbb{R} \to \mathbb{R}$, we say that $g_1 = \mathcal{O}(g_2)$ if there exists a constant $C > 0$ such that for sufficiently large $x$, we have $|g_1(x)| \leq C g_2(x)$. We also write $g_1 = o(g_2)$ if $\lim_{x \to \infty} g_1(x)/g_2(x) = 0$, and denote $g_1 \sim g_2$ if $\lim_{x \to \infty} g_1(x)/g_2(x) = 1$. We say that $g_1(x)$ is a subexponential function if $\lim_{x \to \infty} \frac{1}{x} g_1(x) = 0$.  On the other side, $\Omega$ describe the tight lower bound notation. I.e,.   $g_1 = \Omega(g_2)$ if there exists a constant $C > 0$ such that for sufficiently large $x$, we have $|g_1(x)| \geq C g_2(x)$.

We say that a sequence of random variables $(X_n)_{n\in\mathbb{N}}$ converges almost surely to the random variable $X$, denoted  as
$X_n\rightarrow^{a.s.}X$, if
$
\mathbb{P}\left(\lim_{n \rightarrow \infty}X_n=X\right) = 1
$. Meanwhile, we say that  $(X_n)_{n\in\mathbb{N}}$ converges in distribution to $X$, shown as
$X_n\rightarrow^{d}X$, if $ \lim_{n\to\infty}\mathbb{P}(X_n\leq x)=\mathbb{P}(X\leq x)$ for any $x\in\mathbb{R}$. Throughout the paper, we denote $\mP_{x}$ to be the probability measure when the Markov chain is initialized at $x$, and $\mathbb{E}_\mu(X)$ denotes the expectation of $X$ with respect to $\mu$. 
In Section \ref{sec:timedpenalty}, we introduce the time-dependent penalty parameter $(\alpha_t)_{t \geq 0}$ and write  $\alpha'_{t} = d\alpha_t/dt$ as the derivative of $\alpha_{t}$ with respect to $t$.

\section{Preliminaries}\label{sec2}

Let $\mcH: \mathcal{X} \rightarrow \mathbb{R}$ be a given Hamiltonian function on a finite state space $\mathcal{X}$. The primary objective in this paper is to sample from the target distribution $\mu \propto e^{-\mcH}$. Among various Markov chain Monte Carlo algorithms to sample from $\mu$, we shall be interested in the classical MH chain and its variants in the continuous-time setting. The MH algorithm offers a way to construct Markov chain that is reversible with respect to the target $\mu$, and now we recall its definition:
\begin{definition}[Classical MH with target distribution $\mu$]\label{def:MHpi}
	Given a target distribution $\mu$ on $\mathcal{X}$ and a proposal continuous-time Markov chain $X^0 = (X^0(t))_{t \geq 0}$ with symmetric generator $Q$, the MH chain has generator given by $M^0 = M^0(Q,\mu) = (M^0(x,y))_{x,y \in \mathcal{X}}$, where
	$$M^0(x,y) = M^0(Q,\mu)(x,y) :=\begin{cases} \min\left\{Q(x,y),\dfrac{\mu(y)}{\mu(x)}Q(y,x)\right\} = Q(x,y) e^{- (\mcH(y)- \mcH(x))_+}, &\mbox{if } x \neq y, \\ 
	- \sum_{y: x \neq y} M^0(x,y), & \mbox{if } x = y. \end{cases}$$
	We shall also denote the transition semigroup to be $((P^0)^t)_{t \geq 0}$.
\end{definition}
We remark that the superscript of $0$ that appears in Definition \ref{def:MHpi} will be explained in Section \ref{subsec:MHpif} below: it turns out that in the special case when $f = 0$, the modified MH dynamics that we propose reduces to the classical MH.

%  Let $\mcH: \mathcal{X} \rightarrow \mathbb{R}$ be a given Hamiltonian function on a finite state space $\mathcal{X}$. Our target is  to locate the set
%$$
%\bm{x}_{\min} := \left\{x \in\mathcal{X}: \mcH(x)=\min _{y \in \mathcal{X}}\mcH(y)\right\}
%$$
%of minimizing target function $\mcH$. In probabilistic terms, the algorithm consists of a  continuous Markov chain  $X  = (X_t)_{t \geq 0}$, with  infinitesimal generator,  transition matrices and invariant distribution denoted as $M = (M_{ij})_{i,j \in \mathcal{X}}$, $P = (P_{ij})_{i,j \in \mathcal{X}}$ and $(\mu_i)_{i  \in \mathcal{X}}$, respectively.  

We equip the Hilbert space $\ell^2(\mu) $ with the usual inner product weighted by the distribution $\mu $: for $g_1,g_2 \in \ell^2 (\mu )$,
$$\langle g_1,g_2 \rangle_{\mu } := \sum_{x \in \mathcal{X}} g_1(x) g_2(x) \mu (x),~~~~\|g_1\|_{\mu}=\langle g_1,g_1\rangle_\mu,~~\text{ and }~~ \mu(g_1)=\langle g_1,1\rangle_\mu.$$ 
For any infinitesimal generator $M$ of an ergodic continuous-time Markov chain $X$ with stationary distribution $\mu$, for instance $M^0$, we let $\{0=\lambda_1,\lambda_2,...,\lambda_{|\mathcal{X}|}\}$ be the eigenvalues, counted with their algebraic multiplicities and arranged in non-descending order, of $-M$. In particular, $\lambda_2(-M)$ is the spectral gap of $X$, that is,
\begin{align}\label{eq:spectralgap}
\lambda_2(-M ) := \inf_{g \in \ell^2(\mu): \mu(g) = 0} \dfrac{\langle -Mg,g \rangle_{\mu }}{\langle g,g \rangle_{\mu }}.
\end{align} The spectral gap controls the rate at which a Markov chain converges to its stationary distribution.  
In this paper, we shall be interested in bounding the total variation mixing time. To this end, let us recall that for any probability measure $\nu_1,\nu_2$ with support on $\mathcal{X}$, the \textit{total variation distance} between $\nu_1$ and $\nu_2$ is 
$$||\nu_1 - \nu_2||_{TV} := \sup_{A \subset \mathcal{X}} |\nu_1(A) - \nu_2(A)| = \dfrac{1}{2} \sum_{x \in \mathcal{X}} |\nu_1(x) - \nu_2(x)|.$$  
The total variation mixing time of $X^0$ is then defined as: 
\begin{equation}\label{eq:mixingtime}
t_{mix}^0(\varepsilon) := \inf \bigg\{t \geq 0;~ 	\max_{x \in \mathcal{X}} \left\|\left(P^0\right)^t(x,\cdot)-\mu  \right\|_{TV} \leq \varepsilon\bigg\}.
\end{equation} 
To bound the spectral gap, we recall the concept of critical height or communication height as in the metastability literature \cite{HS87,BH15}. For a given irreducible proposal chain with generator $Q$, a path from $x$ to $y$ is any sequence of points starting from $x_0 = x, x_1, x_2,\ldots, x_n =y$ such that $Q(x_{i-1},x_i) > 0$ for $i = 1,2,\ldots,n$. For any $x \neq y$, such path exists as $Q$ is irreducible. We write $\chi^{x,y}$ to be the set of paths from $x$ to $y$, and elements of $\chi^{x,y}$ are denoted by $\omega = (\omega_i)_{i=0}^n$. The highest value of the original Hamiltonian function $\mcH$ along a path $\omega \in \chi^{x,y}$, known as the elevation, is defined to be 
$$\mathrm{Elev}(\omega) := \max_{\omega_i \in \omega}\{\mcH(\omega_i)\},$$ 
and the lowest possible highest elevation along path(s) from $x$ to $y$ is 
$$H(x,y) := \min_{\omega \in \chi^{x,y}}\{\mathrm{Elev}(\omega)\}.$$
 
For $X^0$, the associated \textit{critical height} is defined to be
\begin{align}\label{eq:chland}
m^0  &:= \max_{x,y \in \mathcal{X}}\{H (x,y) - \mcH(x) - \mcH(y) + \min_x \mcH(x)\}.
\end{align}

 \subsection{Metropolis-Hastings with landscape-modified Hamiltonian}\label{subsec:MHpif}
 
 Instead of directly targeting  $\mcH$ in the Gibbs distribution $\mu (x)\propto e^{-\mcH(x)}$ in MH, we shall consider sampling from an alternative Gibbs distribution 
 \begin{align*}
 	\modcpi(x) = \dfrac{e^{-\modcH(x)} }{\modcZ} \propto e^{-\modcH(x)} 
 \end{align*} in the MH chain, where
	\begin{align}\label{eq:Zf}
		\modcZ := \sum_{x \in \mathcal{X}} e^{-\modcH(x)}
	\end{align}
	is the normalization constant. The modified or transformed Hamiltonian function $\modcH$ depends on three parameters: the function $f$, the threshold parameter $c$ and the penalty parameter $\alpha$. These parameters are assumed to be chosen in the following manner: 
 \begin{assumption}\label{assump:main}
 	\begin{enumerate}		
 		\item\label{it:assumpf} The function $f: \mathbb{R}^+ \to \mathbb{R}^+$ is differentiable, non-negative and non-decreasing. Furthermore, $f$ satisfies
 		$$f(0) = 0.$$
 		
 		\item\label{it:assumpc} The threshold parameter $c$ is chosen so that $\mcH_{\max} \geq c \geq \mcH_{\min}$, where $\mcH_{\max} := \max_{x \in \mathcal{X}} \mcH(x)$ and $\mcH_{\min} := \min_{x \in \mathcal{X}} \mcH(x)$ are respectively the global maximum and minimum value of $\mcH$.
 		
 		\item\label{it:assumpalpha} The penalty parameter $\alpha$ is non-negative, i.e. $\alpha \geq 0$.
 	\end{enumerate}
 \end{assumption}

In the rest of this paper, we suppose that Assumption \ref{assump:main} holds. We now state the definition of $\modcH$:
\begin{definition}[Landscape-modified Hamiltonian $\mcH_{\epsilon,\alpha,c}^f$]
	Suppose Assumption \ref{assump:main} holds. At temperature $\epsilon > 0$, the landscape-modified function $\mcH_{\epsilon,\alpha,c}^f$ is
	\begin{align*}
		\mcH^f_{\epsilon,\alpha,c}(x) &:= \int_{\mcH_{\min}}^{\mcH(x)} \dfrac{1}{\alpha f((u-c)_+) + \epsilon}\,du.
	\end{align*}
	In particular, if we take $\epsilon = 1$, then
	\begin{align*}
		\modcH(x) = \int_{\mcH_{\min}}^{\mcH(x)} \dfrac{1}{\alpha f((u-c)_+) + 1}\,du =  \begin{cases}
			\mcH(x) - \mcH_{\min},\quad & \text{if} \, x \in \{\mcH(x) < c\}, \\
			c - \mcH_{\min} + \int_{c}^{\mcH(x)} \dfrac{1}{\alpha f(u-c) + 1}\,du, \quad & \text{if} \, x \in \{\mcH(x) \geq c\}. \\
		\end{cases}
	\end{align*}
	If we further specialize into $f = 0$ or $\alpha = 0$, then we retrieve the original target distribution as $\mu = \mu^0_{1,\alpha,c} = \mu^f_{1,0,c}$.
\end{definition}

We shall see in Section \ref{sec:spectralgapreduced} that one important benefit of running MH on the modified landscape $\modcH$ is that its associated critical height is less than or equal to the original critical height $m^0$.

To understand $\modcH$ intuitively, we recall that this transformation is first introduced in \cite{C21} for simulated annealing in the discrete setting for global optimization, and is inspired by the overdamped Langevin diffusion with state-dependent diffusion coefficient in \cite{FQG97}. The parameter $c$ can be interpreted as a threshold parameter: on $\{\mcH(x) < c\}$, $\modcH$ shares the same scale as the original $\mcH$; while on $\{\mcH(x) \geq c\}$, the state-dependent effect is activated since the integral
$$\int_{c}^{\mcH(x)} \dfrac{1}{\alpha f(u-c) + 1}\,du \leq \mcH(x) - c.$$
Thus, the function $f$ can be understood as determining the magnitude of the state-dependent effect: if we choose a larger $f$, the above integral on the left hand side and hence $\modcH$ are smaller. On the other hand, this effect is also controlled and penalized by $\alpha$. A larger $\alpha$ gives smaller $\modcH$ yet a larger bias between $\modcpi$ and $\mu$. The name penalty parameter is largely motivated by the work in \cite{KD20}, where the authors therein consider a penalized Langevin diffusion in which the negative of the gradient of the potential function is perturbed with a penalty that vanishes as time increases.

% with the following modified or transformed function $\mcH_{\varepsilon,\alpha_{t},c}^f$ at temperature $\varepsilon = 1$:
% \begin{align}\label{eq:mcHeps}
% \notag\mcH_{\varepsilon}(x) &= \mcH^f_{\varepsilon,\alpha_t,c}(x) := \int_{\mcH_{\min}}^{\mcH(x)} \dfrac{1}{\alpha_{t} f((u-c)_+) + \varepsilon}\,du,\\
% &= \modH(x) := \int_{\mcH_{\min}}^{\mcH(x)} \dfrac{1}{\alpha_{t} f((u-c)_+) + 1}\,du,
% \end{align}
% where $\mcH_{\min} := \min_{x \in \mathcal{X}} \mcH(x)$ denotes the global minimum value of $\mcH$ and the function $f$ and the parameter $c$ are chosen to satisfy the following assumptions:
 
 We now define formally the MH chain that targets $\modcpi$:
 
 \begin{definition}[MH with target distribution $\modcpi$]\label{def:MHpif}
 	Suppose Assumption \ref{assump:main} holds. Given the target distribution $\modcpi$ on $\mathcal{X}$ and a proposal continuous-time Markov chain $\modcX= (\modcX(t))_{t \geq 0}$ with symmetric generator $Q$, the MH chain has generator given by $\modcM= \modcM(Q,\mu) = (\modcM(x,y))_{x,y \in \mathcal{X}}$, where
 	$$\modcM(x,y) :=\begin{cases} \min\left\{Q(x,y),\dfrac{\modcpi(y)}{\modcpi(x)}Q(y,x)\right\} = Q(x,y) e^{- (\modcH(y)-\modcH(x))_+}, &\mbox{if } x \neq y, \\ 
 		- \sum_{y: x \neq y} \modcM(x,y), & \mbox{if } x = y. \end{cases}$$
 	We shall also denote the transition semigroup to be $((\modcP)^t)_{t \geq 0}$, and its spectral gap to be $$\lambda_2(-\modcM) := \inf_{g \in \ell^2(\modcpi): \modcpi(g) = 0} \dfrac{\langle -\modcM g,g \rangle_{\modcpi }}{\langle g,g \rangle_{\modcpi}}.$$ Note that if $f = 0$ or $\alpha = 0$, the above dynamics reduce to the classical MH with target distribution $\mu$ since
 	$$M^0 = M^0_{1,\alpha,c} = M^f_{1,0,c}.$$
 \end{definition}
 
 \begin{rk}
 	In view of Definition \ref{def:MHpif}, the knowledge of $\mcH_{\min}$ is not required in simulating $\modcX$ since the difference, for $x,y \in \mathcal{X}$,
 	$$\modcH(y)- \modcH(x) = \int_{\mcH(x)}^{\mcH(y)} \dfrac{1}{\alpha f((u-c)_+) + 1}\,du$$
 	does not depend on $\mcH_{\min}$.
 \end{rk}
% The chain with new modification $\modX=\left(\modX(t)\right)_{t \geqslant 0}$ has target distribution $ \modpi(x) \propto e^{-\modH(x)} $. Denote its   transition matrices as  $\modP=(\modP(x,y))_{x,y\in\mathcal{X}}$, infinitesimal generator as $\modM=(\modM(x,y))_{x,y\in\mathcal{X}}$ and corresponding spectral gap as $\lambda_2(-\modM)$. 
%  Note that when we take $\alpha_{t} = 0,$ the above dynamics also reduce to $X$.
  
In the following subsections, we compute explicitly the landscape-modified Hamiltonian $\modcH$ with linear, quadratic or exponential $f$.

\subsection{Landscape-modified Hamiltonian $\modcH$ with $f(x) = x$}\label{subsec:linearf}

In this subsection, we compute explicitly $\modcH$ when we specialize into $f(x) = x$, which gives
\begin{align*}
	\modcH(x) = \begin{cases}
	\mcH(x) - \mcH_{\min},\quad & \text{if} \, x \in \{\mcH(x) < c\}, \\
	c - \mcH_{\min} + \dfrac{1}{\alpha} \ln (1 + \alpha (\mcH(x) - c)), \quad & \text{if} \, x \in \{\mcH(x) \geq c\}. \\
	\end{cases}
\end{align*}

Using the fact that
$$\lim_{\alpha \to 0} \dfrac{1}{\alpha} \ln (1 + \alpha (\mcH(x) - c)) = \mcH(x) - c,$$
we thus obtain, for $x \in \mathcal{X}$,
$$\lim_{\alpha \to 0} \modcH(x) = \mcH(x)-\mcH_{\min}.$$
In other words, we retrieve the original target distribution $\mu\propto  {e^{-\mcH}}\propto e^{-(\mcH-\mcH_{\min})
}$ when we take $\alpha \to 0$, as expected.

\subsection{Landscape-modified Hamiltonian $\modcH$ with $f(x) = x^2$}\label{subsec:quadraticf}

In this subsection, we write down explicitly $\modcH$ when we specialize into $f(x) = x^2$, which yields
\begin{align}\label{eq:Hal_x2}
\modcH(x) = \begin{cases}
\mcH(x) - \mcH_{\min},\quad & \text{if} \, x \in \{\mcH(x) < c\}, \\
c - \mcH_{\min} + \dfrac{1}{\sqrt{\alpha}} \arctan( \sqrt{\alpha} (\mcH(x) - c) ), \quad & \text{if} \, x \in \{\mcH(x) \geq c\}. \\
\end{cases}
\end{align}

As a result, we observe that 
$$\max_x \modcH(x) \leq c - \mcH_{\min} + \dfrac{1}{\sqrt{\alpha}} \dfrac{\pi}{2}.$$
Consequently, if we take $c = \mcH_{\min} + \delta$, where $\delta > 0$, the maximum of $\modcH$ can be bounded above independently of $\mcH_{\max}$ or $\mcH_{\min}$. This important observation will be utilized in Section \ref{sec:apps} on a few statistical physics models to illustrate rapid mixing of the proposed annealing algorithm in Section \ref{sec:timedpenalty}. 

\subsection{Landscape-modified Hamiltonian $\modcH$ with $f(x) = e^x-1$}\label{subsec:expcf}

In this subsection, we calculate explicitly $\modcH$ when we specialize into $f(x) = e^x-1$, which yields
\begin{align}\label{eq:Hal_exp}
\modcH(x) = \begin{cases}
\mcH(x) - \mcH_{\min},\quad & \text{if} \, x \in \{\mcH(x) < c\}, \\
c - \mcH_{\min} +\dfrac{\log \left(\alpha\left(e^{\mcH(x)}-1\right)+1\right)-\log \left(\alpha\left(e^{c}-1\right)+1\right)-(\mcH(x)-c)}{\alpha-1} , \quad & \text{if} \, x \in \{\mcH(x) \geq c\}. \\
\end{cases}
\end{align}
Also, we know that 
$$\lim_{\alpha \to 0} \dfrac{\log (\alpha(e^{\mcH(x)}-1)+1)}{\alpha-1}   = 0,$$
thus 
$$\lim_{\alpha \to 0} \dfrac{\log \left(\alpha\left(e^{\mcH(x)}-1\right)+1\right)-\log \left(\alpha\left(e^{c}-1\right)+1\right)-(\mcH(x)-c)}{\alpha-1}=H(x)-c,$$ 
which implies that, for $x \in \mathcal{X}$,
$$\lim_{\alpha \to 0} \modcH(x) = \mcH(x)-\mcH_{\min}.$$
In other words, we recover the original target distribution $\mu\propto  {e^{-\mcH}}\propto e^{-(\mcH-\mcH_{\min})
}$ when we take $\alpha \to 0$, as expected.

\section{Spectral gap bounds in terms of the reduced critical height}\label{sec:spectralgapreduced}

In this Section, we bound the spectral gap of $X^f_{1,\alpha,c}$, the MH  dynamics with target distribution $\pi^f_{1,\alpha,c}$ as introduced in Definition \ref{def:MHpif}, in terms of its associated critical height. It turns out that this critical height is reduced and is less than or equal to the original critical height $m^0$, which demonstrates one advantage of running MH algorithms on the modified landscape described by $\mcH^f_{1,\alpha,c}$.

Recall that we write $\chi^{x,y}$ to be the set of paths from $x$ to $y$ as specified by the proposal generator. The highest value of the landscape-modified Hamiltonian function $\mcH^f_{1,\alpha,c}$ along a path $\omega \in \chi^{x,y}$ is defined to be 
$$\mathrm{Elev}^f_{1,\alpha,c}(\omega) := \max_{\omega_i \in \omega}\{\mcH^f_{1,\alpha,c}(\omega_i)\},$$ 
and the lowest possible highest elevation along path(s) from $x$ to $y$ is 
$$H^f_{1,\alpha,c}(x,y) := \min_{\omega \in \chi^{x,y}}\{\mathrm{Elev}^f_{1,\alpha,c}(\omega)\}.$$

With the above notations in mind, we define the concept of reduced critical height:

\begin{definition}[Reduced critical height]
	\label{def:Reduced critical height}
	Given the same proposal chain with generator $Q$ of both $X^0$ and $X^f_{1,\alpha,c}$, Then the critical height of $X^f_{1,\alpha,c}$ is defined to be
	$$m_{1,\alpha, c}^{f} := \max_{x,y \in \mathcal{X}}\{H^f_{1,\alpha,c} (x,y) - \mcH^f_{1,\alpha,c}(x) - \mcH^f_{1,\alpha,c}(y) \} . $$
\end{definition}

\begin{rk}\label{rk:reducedchbound}
	Suppose $(x^*,y^*)$ attains the critical height $m^f_{1,\alpha,c}$ of $X^f_{1,\alpha,c}$ in the sense that it satisfies $m^f_{1,\alpha,c} =\mcH^f_{1,\alpha,c} (x^*)-\mcH^f_{1,\alpha,c}(y^*)$.
	Since $\alpha \geq 0$ and $f\geq 0$ we thus have 
%	$$\int_{\mcH\left(y^{*}\right)}^{\mcH\left(x^{*}\right)} \frac{1}{\alpha f\left((u-c)_{+}\right)+1} \,du = \mcH^f_{1,\alpha,c}(x^*) - \mcH^f_{1,\alpha,c}(y^*)$$
	\begin{align*}
	m_{1,\alpha, c}^{f}  
	=\int_{\mcH\left(y^{*}\right)}^{\mcH\left(x^{*}\right)} \frac{1}{\alpha f\left((u-c)_{+}\right)+1} \, d u 
	\leq \mcH(x^*)-\mcH(y^*) \leq m^0.
	\end{align*}
	Thus, the reduced critical height $m_{1,\alpha, c}^{f}$ is bounded above by the original critical height $m^0$. This illustrates one advantage of using the chain $X^f_{1,\alpha,c}$.
\end{rk}

%\begin{theorem}\label{thm:chland2}
%	$$m^f_{1,\alpha,c}  = \max_{x,y \in \mathcal{X}}\{G^f_{1,\alpha,c} (x,y) - \mcH^f_{1,\alpha,c}(x) - \mcH^f_{1,\alpha,c}(y) \}.$$
%\end{theorem} 
%
%\begin{proof}
%	First, we note that $\mcH(x) \leq \mcH(y)$ holds if and only if $\mcH^f_{1,\alpha,c}(x) \leq \mcH^f_{1,\alpha,c}(y)$. In other words, the monotonicity structure is preserved under landscape modification.
%	
%	We first prove that 
%	$$m^f_{1,\alpha,c}  \leq \max_{x,y \in \mathcal{X}}\{G^f_{1,\alpha,c} (x,y) - \mcH^f_{1,\alpha,c}(x) - \mcH^f_{1,\alpha,c}(y) \}$$
%	Suppose we have
%	$$m^0 = \mcH(x^*) - \mcH(y^*) = G(z^*,y^*) - \mcH(y^*),$$
%	where $z^*$ is a global minimum of $\mcH$. Then 
%	$$\max_{x,y \in \mathcal{X}}\{G^f_{1,\alpha,c} (x,y) - \mcH^f_{1,\alpha,c}(x) - \mcH^f_{1,\alpha,c}(y) \} \geq G^f_{1,\alpha,c}(z^*,y^*) - \mcH^f_{1,\alpha,c}(y^*) = \mcH^f_{1,\alpha,c}(x^*) - \mcH^f_{1,\alpha,c}(y^*) = m^f_{1,\alpha,c}.$$
%	
%	Assume the contrary that 
%	$$\max_{x,y \in \mathcal{X}}\{G^f_{1,\alpha,c} (x,y) - \mcH^f_{1,\alpha,c}(x) - \mcH^f_{1,\alpha,c}(y) \} =G^f_{1,\alpha,c}(z_2^*,y_2^*) - \mcH^f_{1,\alpha,c}(y_2^*) = \mcH^f_{1,\alpha,c}(x_2^*) - \mcH^f_{1,\alpha,c}(y_2^*)> m^f_{1,\alpha,c}.$$
%	
%	We then consider two cases:
%	\begin{itemize}
%		\item Case 1: $\mcH(x_2^*) - \mcH(y_2^*) > \mcH(x^*) - \mcH(y^*)$\newline
%		This is impossible since this case violates the definition of $m^0$.
%		
%		\item Case 2: $\mcH(x_2^*) - \mcH(y_2^*) \leq \mcH(x^*) - \mcH(y^*)$\newline
%		
%	\end{itemize}
%\end{proof}

Using \cite[Lemma $2.7$]{HS88} with the Hamiltonian function therein be replaced by $\mcH^f_{1,\alpha,c}$ and at temperature $1$, we can readily bound the spectral gap $\lambda_2(-M^f_{1,\alpha,c})$ in terms of $m^f_{1,\alpha,c}$: 
\begin{lemma}\label{lem:spectralgap}
	\[
	\tilde{C}e^{-m_{1,\alpha,c}^f}\geq \lambda_2(-\modM)\geq Ce^{-m_{1,\alpha,c}^f} \geq C e^{-m^0},
	\]where $C$ and $\tilde{C}$ only depend on $\mathcal{X}$ and
	$ 
	m_{1,\alpha, c}^{f}
	$ is the reduced critical height as introduced in Definition \ref{def:Reduced critical height}.
\end{lemma}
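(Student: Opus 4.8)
The plan is to invoke the known spectral gap estimate for Metropolis-type dynamics at fixed temperature directly with the landscape-modified Hamiltonian in place of the original one. Specifically, \cite[Lemma 2.7]{HS88} gives two-sided bounds of the form $\tilde C e^{-m} \geq \lambda_2(-M) \geq C e^{-m}$ for a Metropolis chain with symmetric proposal generator $Q$ at temperature $1$, where $m$ is the critical height associated with the Hamiltonian driving the dynamics and the constants $C, \tilde C$ depend only on the state space (through $|\mathcal{X}|$, the proposal transition rates, and the structure of the paths). Since $\modcM$ is, by Definition \ref{def:MHpif}, exactly a Metropolis chain with the same symmetric proposal $Q$ but with target $\modcpi \propto e^{-\modcH}$, one applies that lemma verbatim with $\mcH$ replaced by $\modcH$ and temperature set to $1$; the relevant critical height is then precisely $m^f_{1,\alpha,c}$ as defined in Definition \ref{def:Reduced critical height}. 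This yields the first two inequalities $\tilde C e^{-m^f_{1,\alpha,c}} \geq \lambda_2(-\modcM) \geq C e^{-m^f_{1,\alpha,c}}$.

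The third inequality $C e^{-m^f_{1,\alpha,c}} \geq C e^{-m^0}$ is immediate from Remark \ref{rk:reducedchbound}, which already establishes $m^f_{1,\alpha,c} \leq m^0$, since $x \mapsto e^{-x}$ is decreasing: $e^{-m^f_{1,\alpha,c}} \geq e^{-m^0}$. Hence the chain of inequalities closes.

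The main point requiring care is checking that the hypotheses of \cite[Lemma 2.7]{HS88} are genuinely met and that the constants $C, \tilde C$ can indeed be taken to depend only on $\mathcal{X}$ and not on $\alpha$, $c$, or $f$. The potential subtlety is that the constants in the Holley–Stroock-type estimate typically involve the minimum nonzero proposal rate, the maximal path length, and possibly oscillation terms of the Hamiltonian; one must confirm that in the formulation being cited the constants are controlled purely by the proposal chain $Q$ and the geometry of $\mathcal{X}$, with the Hamiltonian-dependence entirely absorbed into the exponential factor $e^{-m}$. Since $Q$ is the same proposal generator for $X^0$ and $X^f_{1,\alpha,c}$, this is exactly the regime in which the cited lemma applies uniformly, so $C$ and $\tilde C$ are the same constants as for the original chain. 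No further estimates are needed; the proof is essentially a citation plus the monotonicity observation from Remark \ref{rk:reducedchbound}.
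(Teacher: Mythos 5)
Your proposal matches the paper's argument exactly: the paper also obtains the two-sided bound by citing \cite[Lemma 2.7]{HS88} with the Hamiltonian replaced by $\mcH^f_{1,\alpha,c}$ at temperature $1$, and then deduces the final inequality from the bound $m^f_{1,\alpha,c} \leq m^0$ already established in Remark \ref{rk:reducedchbound}. Your additional caveat about verifying that $C$ and $\tilde C$ depend only on the proposal $Q$ and $\mathcal{X}$ (not on $\alpha$, $c$, $f$) is well placed and is implicitly what the paper is relying on.
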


Consequently, thanks to Lemma \ref{lem:spectralgap} we obtain

\begin{corollary}\label{cor:spectralgap}
	Suppose $\alpha = (\alpha_t)_{t \geq 0}$ is time-dependent. We then have 
	\begin{equation} 
	\lambda_2(-\modM)\geq Ce^{-m^0}
	\end{equation} where $Ce^{-m^0}$ is independent of $t$, which implies that \[\int_0^\infty \lambda_2(-\modM)dt=\infty.\]
\end{corollary}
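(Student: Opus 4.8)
The plan is to apply Lemma \ref{lem:spectralgap} pointwise in $t$, exploiting that the constant $C$ there is a property of the state space and the proposal chain only. First I would fix an arbitrary $t \ge 0$ and freeze the penalty parameter at the value $\alpha_t$, so that $\modM$ is the generator of a time-homogeneous MH chain with target $\modpi$. Lemma \ref{lem:spectralgap}, applied with $\alpha$ replaced by the constant $\alpha_t$, then yields
\[
\lambda_2(-\modM) \ge C\, e^{-m^f_{1,\alpha_t,c}} \ge C\, e^{-m^0},
\]
where the key point is that $C$ depends only on $\mathcal{X}$ (and the fixed proposal generator $Q$), and in particular not on the value $\alpha_t$.

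Next I would record that $m^0$ is itself independent of $t$, being defined in \eqref{eq:chland} purely through $\mcH$ and the paths $\chi^{x,y}$ determined by $Q$; and that $m^f_{1,\alpha_t,c} \le m^0$ uniformly in $t$, which is exactly the content of Remark \ref{rk:reducedchbound}: since $\alpha_t \ge 0$ and $f \ge 0$ we have $1/(\alpha_t f((u-c)_+)+1) \le 1$, so the defining integral of $m^f_{1,\alpha_t,c}$ is bounded above by $\mcH(x^*) - \mcH(y^*) \le m^0$. Combining the two displays, $\lambda_2(-\modM) \ge C e^{-m^0}$ for every $t \ge 0$, with the right-hand side a strictly positive constant that does not depend on $t$.

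The conclusion is then immediate: integrating this uniform lower bound gives
\[
\int_0^\infty \lambda_2(-\modM)\, dt \;\ge\; \int_0^\infty C e^{-m^0}\, dt \;=\; \infty .
\]

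The only step that genuinely requires care is justifying the uniformity of $C$ — that in Lemma \ref{lem:spectralgap}, inherited from \cite[Lemma~2.7]{HS88}, the constant depends solely on $|\mathcal{X}|$, the minimal positive transition rate of $Q$, and the combinatorics of the path families $\chi^{x,y}$, all of which are untouched by the choice of $\alpha$, so that the entire $\alpha$-dependence is absorbed into the exponent $m^f_{1,\alpha_t,c}$. Once this is granted (it follows by simply tracking constants through the canonical-path/comparison argument used to prove that lemma), there is no further obstacle, and the corollary follows.
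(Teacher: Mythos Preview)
Your proof is correct and follows exactly the same approach as the paper: the corollary is stated as an immediate consequence of Lemma~\ref{lem:spectralgap}, using that $C$ depends only on $\mathcal{X}$ and that $m^0$ is $t$-independent, then integrating the uniform positive lower bound. The paper gives no further detail beyond citing the lemma, so your write-up is, if anything, more explicit.
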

 
We remark that Corollary \ref{cor:spectralgap} is useful  in establishing strong ergodicity in Section \ref{sec:timedpenalty} when we consider the time-dependent penalty.

\section{Constant penalty $\alpha_t = \alpha$ for all $t \geq 0$}\label{sec:constantpenalty}
  In this Section, we focus in the constant penalty case, i.e. the penalty parameter takes on a fixed value with $\alpha_t = \alpha > 0$ for all $t \geq 0$. In this setting, we first identify a phenomenon that we call the (total-variation) convergence-bias tradeoff: using triangle inequality, we note that, for any $x \in \mathcal{X}$,
  \begin{align}\label{eq:convergencebias}
  	\left\|\left(\modcP\right)^t(x,\cdot)-\mu  \right\|_{TV} \leq \underbrace{\left\|\left(\modcP\right)^t(x,\cdot)-\modcpi\right\|_{TV}}_{\text{convergence}} + \underbrace{\|\modcpi-\mu \|_{TV}}_{\text{bias}}.
  \end{align}
   On one hand, we would like to choose $\alpha$ to be large so that the reduced critical height $m^f_{1,\alpha,c}$ is small, leading to fast convergence of $\left(\modcP\right)^t(x,\cdot)$ towards $\modcpi$. On the other hand, we would like to choose $\alpha$ to be small so that the bias between $\modcpi$ and $\mu$ is small when measured in terms of the total variation distance. We quantify the above statements into the following theorem:
  
  \begin{theorem}\label{thm:ConvergenceTradeoff}
  		Suppose that Assumption \ref{assump:main} holds.
  		
  	   \begin{enumerate}
  		\item\label{it:largealpha} (Large $\alpha$ leads to fast convergence): For $x \in \mathcal{X}$,
  		
  		\begin{align*}
  			\dfrac{1}{2} e^{- \lambda_2(-\modcM)t} \leq \left\|\left(\modcP\right)^t(x,\cdot) -\modcpi\right\|_{TV} \leq \dfrac{e^{\mcH_{\max}-\mcH_{\min}} |\mathcal{X}|}{2} e^{- \lambda_2(-\modcM)t}.
  		\end{align*}
  		
  		\item\label{it:smallalpha} (Small $\alpha$ gives small total variation bias): $$\|\modcpi-\mu \|_{TV} = \mathcal{O}(\alpha).$$
  	\end{enumerate} 
  \end{theorem}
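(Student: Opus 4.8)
The plan is to handle the two parts separately, since they are essentially independent. Part \eqref{it:largealpha} is a standard spectral-gap-to-total-variation estimate. For the lower bound $\tfrac12 e^{-\lambda_2(-\modcM)t} \le \|(\modcP)^t(x,\cdot) - \modcpi\|_{TV}$, I would use the standard fact that for a reversible chain, the total variation distance from stationarity decays no faster than the spectral gap; concretely, take the eigenfunction $g$ associated with $\lambda_2(-\modcM)$ with $\|g\|_\infty = 1$, note $\modcpi(g) = 0$, and observe that $(\modcP)^t g = e^{-\lambda_2 t} g$, so that for a well-chosen starting point $x$ one gets $|(\modcP)^t g(x) - \modcpi(g)| = e^{-\lambda_2 t}|g(x)|$, which is bounded by $2\|(\modcP)^t(x,\cdot)-\modcpi\|_{TV}$; choosing $x$ to maximize $|g(x)|$ gives the factor $\tfrac12$. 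For the upper bound, the standard chain of inequalities $\|(\modcP)^t(x,\cdot)-\modcpi\|_{TV} \le \tfrac12 \chi^2\text{-type bound}$ applies: using reversibility with respect to $\modcpi$, $4\|(\modcP)^t(x,\cdot)-\modcpi\|_{TV}^2 \le \sum_y \frac{((\modcP)^t(x,y)-\modcpi(y))^2}{\modcpi(y)} \le \frac{1}{\modcpi(x)} e^{-2\lambda_2 t}$, and then bound $\modcpi(x)^{-1} \le \modcZ \le |\mathcal{X}| e^{\mcH_{\max}-\mcH_{\min}}$ using that $0 \le \modcH \le \mcH_{\max}-\mcH_{\min}$ (which follows from the explicit formula in the definition and Assumption \ref{assump:main}, since the integrand is at most $1$). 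Taking square roots yields the claimed bound, perhaps up to adjusting the constant to the stated form.

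For part \eqref{it:smallalpha}, the plan is to show $\modcpi(x) = \mu(x) + \mathcal{O}(\alpha)$ uniformly in $x$ and then sum. Write $\modcpi(x) = e^{-\modcH(x)}/\modcZ$ and $\mu(x) = e^{-(\mcH(x)-\mcH_{\min})}/Z$ where $Z = \sum_x e^{-(\mcH(x)-\mcH_{\min})}$. The key estimate is that $\modcH(x) \to \mcH(x)-\mcH_{\min}$ as $\alpha \to 0$ with an $\mathcal{O}(\alpha)$ rate: indeed, for $x$ with $\mcH(x) \ge c$,
\begin{align*}
0 \le (\mcH(x)-\mcH_{\min}) - \modcH(x) = \int_c^{\mcH(x)} \left(1 - \frac{1}{\alpha f(u-c)+1}\right) du = \int_c^{\mcH(x)} \frac{\alpha f(u-c)}{\alpha f(u-c)+1}\, du \le \alpha \int_c^{\mcH(x)} f(u-c)\, du,
\end{align*}
which is $\le \alpha (\mcH_{\max}-c) f(\mcH_{\max}-c) = \mathcal{O}(\alpha)$ since $f$ is non-decreasing and $\mathcal{X}$ is finite; for $x$ with $\mcH(x) < c$ the difference is exactly zero. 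Hence $e^{-\modcH(x)} = e^{-(\mcH(x)-\mcH_{\min})}(1 + \mathcal{O}(\alpha))$ uniformly (using $|e^s - 1| \le |s|e^{|s|}$ with $s = \mathcal{O}(\alpha)$ bounded), so $\modcZ = Z(1+\mathcal{O}(\alpha))$, and therefore $\modcpi(x) = \mu(x)(1+\mathcal{O}(\alpha))/(1+\mathcal{O}(\alpha)) = \mu(x) + \mathcal{O}(\alpha)$ uniformly in $x$. Summing over the finitely many $x \in \mathcal{X}$ and dividing by $2$ gives $\|\modcpi - \mu\|_{TV} = \mathcal{O}(\alpha)$.

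The main obstacle, such as it is, is bookkeeping the constants so that the final bound matches the exact form stated (the $\tfrac12$ and the $e^{\mcH_{\max}-\mcH_{\min}}|\mathcal{X}|/2$ prefactors) rather than a generic $\mathcal{O}$; this is routine but requires care about whether one uses the $\ell^2$/$\chi^2$ route or a cruder $\ell^1$ bound. For part \eqref{it:smallalpha}, the only subtlety is making the $\mathcal{O}(\alpha)$ genuinely uniform over $x \in \mathcal{X}$, which is immediate here because $\mathcal{X}$ is finite and all the relevant quantities ($\mcH_{\max}$, $\mcH_{\min}$, $f$ evaluated on the bounded range) are fixed constants independent of $\alpha$.
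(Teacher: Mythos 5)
Your proposal is correct and follows essentially the same route as the paper's own proof: the lower bound via the eigenfunction of $-\modcM$ associated with $\lambda_2$, the upper bound via the $\ell^2/\chi^2$ decay bound combined with $\min_x \modcpi(x) \geq e^{-(\mcH_{\max}-\mcH_{\min})}/|\mathcal{X}|$, and for part (2) the estimate $0 \leq (\mcH(x)-\mcH_{\min}) - \modcH(x) \leq \alpha\int_c^{\mcH(x)} f(u-c)\,du = \mathcal{O}(\alpha)$ pushed through the partition functions. One small slip: the intermediate inequality ``$\modcpi(x)^{-1} \leq \modcZ$'' is false (in fact $\modcpi(x)^{-1} = \modcZ\, e^{\modcH(x)} \geq \modcZ$); the intended and correct chain is $\modcpi(x)^{-1} = \modcZ\, e^{\modcH(x)} \leq |\mathcal{X}|\, e^{\mcH_{\max}-\mcH_{\min}}$, after which your $\chi^2$ bound yields $\tfrac12\sqrt{|\mathcal{X}|\,e^{\mcH_{\max}-\mcH_{\min}}}\,e^{-\lambda_2 t}$, which is actually slightly sharper than the stated prefactor.
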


The proof of Theorem \ref{thm:ConvergenceTradeoff} is given in Section \ref{subsec:Proof_Convergencetradeoff}.

%{A self-normalized estimator}

To mitigate the bias induced by running the Markov chain $X^f_{1,\alpha,c}$ to approximately sample from $\modcpi$ instead of $\mu$, in this Section, we consider a self-normalized estimator to correct the bias. Define \textit{the weight function} to be
$$w(x) = w^f_{1,\alpha,c}(x) := \dfrac{e^{-(\mcH(x) - \mcH_{\min})}}{e^{-\modcH(x)}},$$
and for any $g\in\ell^2(\mu)$, \textit{the self-normalized estimator} by 
$$\dfrac{\int_0^t w(\miY) g(\miY) \, ds}{\int_0^t w(\miY) \, ds}.$$
We remark that the weight function $w(x)$ can be computed readily since
\begin{align*}
	w(x) &= \dfrac{e^{-(\mcH(x) - \mcH_{\min})}}{e^{-\modcH(x)}} \\
	&= \dfrac{e^{-(\mcH(x) - \mcH_{\min})}}{\exp\bigg\{-(c \wedge \mcH(x) - \mcH_{\min}) - \int_{c \wedge \mcH(x)}^{\mcH(x)} \dfrac{1}{\alpha f((u-c)_+) + 1} \, du\bigg\}} \\
	&= \dfrac{e^{-\mcH(x)}}{\exp\bigg\{-c \wedge \mcH(x)  - \int_{c \wedge \mcH(x)}^{\mcH(x)} \dfrac{1}{\alpha f((u-c)_+)  + 1} \, du\bigg\}}
\end{align*}
does not depend on $\mcH_{\min}$. In the special case where $f = 0$ or $\alpha = 0$, we note that $w = 1$. Recall that $\modcZ$ is the normalization constant of $\modcpi$ \eqref{eq:Zf}, and we adopt the notation $$Z^0 := Z^0_{1,\alpha,c} = \sum_{x \in \mathcal{X}} e^{-(\mcH(x) - \mcH_{\min})}$$ to be the normalization constant when $f = 0$. We then have the following asymptotic convergence results for the proposed self-normalized estimator:

\begin{theorem}\label{thm:CLTselfnormalized}
	For $g \in \ell^2(\mu)$, as $t \to \infty$ we have
	\begin{enumerate}
		\item\label{it:strongConsistencyselfNomalized} (Strong consistency   of the self-normalized estimator)
		$$\dfrac{\int_0^t w(\miY) g(\miY) \, ds}{\int_0^t w(\miY) \, ds} \rightarrow^{a.s.} \mu(g).$$
		\item \label{it:CLTselfNomalized}(Central limit theorem of the self-normalized estimator)
		$$\sqrt{t} \left(\dfrac{\int_0^t w(\miY) g(\miY) \, ds}{\int_0^t w(\miY) \, ds} - \mu(g)\right) \rightarrow^{d} \dfrac{\modcZ}{Z^0} N(0,\mathrm{Var}_{\modcpi}(wg)).$$ 
	\end{enumerate}	
where the limiting variance is defined to be  $$\mathrm{Var}_{\modcpi}(wg) :=\lim_{t\to\infty}\mathbb{E}_{\mu}\left(  \dfrac{\int_0^t w(\miY) g(\miY) \, ds}{\int_0^t w(\miY) \, ds} - \mu(g) \right)^2.$$

	In particular, in the special case when $f = 0$ or $\alpha = 0$ so that $w = 1$, the above results reduce to the classical Markov chain law of large number and central limit theorem for the MH chain respectively.
\end{theorem}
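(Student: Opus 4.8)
The plan is to recognize the self-normalized estimator as a ratio of two additive functionals of the ergodic, reversible chain $X^f_{1,\alpha,c}$ and then combine the ergodic theorem with a Markov chain central limit theorem. Everything rests on the algebraic identity
\[
w(x) = \frac{e^{-(\mcH(x)-\mcH_{\min})}}{e^{-\modcH(x)}} = \frac{Z^0\,\mu(x)}{\modcZ\,\modcpi(x)},
\]
which is immediate from $e^{-(\mcH(x)-\mcH_{\min})} = Z^0\mu(x)$ and $e^{-\modcH(x)} = \modcZ\,\modcpi(x)$. Summing against $\modcpi$ gives $\modcpi(w) = Z^0/\modcZ > 0$ and $\modcpi(wg) = (Z^0/\modcZ)\mu(g)$, hence $\modcpi(wg)/\modcpi(w) = \mu(g)$. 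For part (1), since $\mathcal X$ is finite and $X^f_{1,\alpha,c}$ is irreducible with stationary distribution $\modcpi$, the ergodic theorem for continuous-time Markov chains yields, almost surely and irrespective of the initial distribution, $\frac1t\int_0^t w(\miY)g(\miY)\,ds \to \modcpi(wg)$ and $\frac1t\int_0^t w(\miY)\,ds \to \modcpi(w) = Z^0/\modcZ$; dividing one limit by the other proves strong consistency.

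For part (2), set $h(x) := w(x)\big(g(x)-\mu(g)\big)$, so that $\modcpi(h) = \modcpi(wg) - \mu(g)\modcpi(w) = 0$. A one-line rearrangement gives the exact identity
\[
\sqrt{t}\left(\frac{\int_0^t w(\miY)g(\miY)\,ds}{\int_0^t w(\miY)\,ds} - \mu(g)\right) = \frac{\frac{1}{\sqrt t}\int_0^t h(\miY)\,ds}{\frac1t\int_0^t w(\miY)\,ds}.
\]
The denominator tends almost surely to $\modcpi(w) = Z^0/\modcZ$ by the argument above. For the numerator I would invoke the central limit theorem for additive functionals of the finite-state reversible chain $X^f_{1,\alpha,c}$: since $\modcpi(h)=0$, the Poisson equation $\modcM\hat h = -h$ is solvable on the orthogonal complement of the constants, Dynkin's formula shows that $\hat h(X^f_{1,\alpha,c}(t)) - \hat h(X^f_{1,\alpha,c}(0)) + \int_0^t h(\miY)\,ds$ is a martingale, and the martingale CLT gives $\frac{1}{\sqrt t}\int_0^t h(\miY)\,ds \rightarrow^{d} N(0,\sigma_h^2)$ with $\sigma_h^2 = 2\langle h,(-\modcM)^{-1}h\rangle_{\modcpi} = \lim_{t\to\infty}\frac1t\mathrm{Var}_{\modcpi}\big(\int_0^t h(\miY)\,ds\big)$. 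Slutsky's theorem then delivers
\[
\sqrt{t}\left(\frac{\int_0^t w(\miY)g(\miY)\,ds}{\int_0^t w(\miY)\,ds} - \mu(g)\right) \rightarrow^{d} \frac{\modcZ}{Z^0}\,N(0,\sigma_h^2),
\]
which is the asserted limit once we identify $\mathrm{Var}_{\modcpi}(wg)$ with $\sigma_h^2$.

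To match the stated definition of the limiting variance — a limit of rescaled mean-squared errors — one then upgrades this distributional convergence to convergence of the rescaled second moment, $t\,\mathbb{E}_\mu\big(\frac{\int_0^t w(\miY)g(\miY)\,ds}{\int_0^t w(\miY)\,ds} - \mu(g)\big)^2 \to (\modcZ/Z^0)^2\sigma_h^2$. This requires uniform integrability of the square of the left-hand side of the exact identity above; it follows because $w$ is strictly positive on the finite set $\mathcal X$, so its integral average in the denominator is bounded below by $\min_x w(x) > 0$ deterministically, reducing the matter to $\sup_t \mathbb{E}_\mu\big|\frac{1}{\sqrt t}\int_0^t h(\miY)\,ds\big|^{2+\delta} < \infty$, which is supplied by standard exponential concentration inequalities for additive functionals of finite-state reversible chains. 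I expect the crux to be exactly this two-part step for the central limit theorem — setting up the continuous-time Markov chain CLT, pinning down $\sigma_h^2$ via the Poisson equation, and then justifying the moment convergence that reconciles it with the definition of $\mathrm{Var}_{\modcpi}(wg)$; the identity for $w$, the ergodic theorem, and the Slutsky/ratio manipulations are routine. In the degenerate case $f = 0$ or $\alpha = 0$ we have $w \equiv 1$, $Z^0 = \modcZ$, and $h = g - \mu(g)$, so both statements collapse to the classical law of large numbers and CLT for the MH chain, as claimed.
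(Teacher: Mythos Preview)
Your argument is correct and follows essentially the same blueprint as the paper --- ergodic theorem on numerator and denominator for part (1), then a Markov chain CLT combined with Slutsky for part (2) --- but your handling of the CLT step is genuinely cleaner than the paper's. The paper applies the CLT directly to $wg$, obtaining $\sqrt{t}\big(\tfrac{1}{t}\int_0^t wg\,ds - \modcpi(wg)\big)\to^d N(0,\mathrm{Var}_{\modcpi}(wg))$, and then divides by $\tfrac{1}{t}\int_0^t w\,ds$ via Slutsky; this glosses over the fact that replacing the random denominator by its limit inside the centering contributes its own $\sqrt t$-scale fluctuation. Your exact identity
\[
\sqrt{t}\left(\frac{\int_0^t wg\,ds}{\int_0^t w\,ds} - \mu(g)\right) = \frac{\tfrac{1}{\sqrt t}\int_0^t h\,ds}{\tfrac{1}{t}\int_0^t w\,ds},\qquad h=w(g-\mu(g)),\ \modcpi(h)=0,
\]
sidesteps this entirely: the CLT is applied to the already-centered $h$, and Slutsky then acts on a genuine ratio with a convergent denominator. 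This yields the correct asymptotic variance $\sigma_h^2$ automatically, and your Poisson-equation identification $\sigma_h^2 = 2\langle h,(-\modcM)^{-1}h\rangle_{\modcpi}$ is a welcome bonus the paper omits.

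You also go further than the paper in addressing the variance definition. The paper simply declares $\mathrm{Var}_{\modcpi}(wg)$ to be whatever appears in the limiting Gaussian; your uniform-integrability argument (deterministic lower bound on the denominator via $\min_x w(x)>0$, plus moment bounds on $\tfrac{1}{\sqrt t}\int_0^t h\,ds$) is the right way to upgrade distributional convergence to second-moment convergence and thereby reconcile with the stated definition, though you should be aware that the theorem's definition as written (without a factor of $t$) and the scaling $\modcZ/Z^0$ in the limit make the identification with $\sigma_h^2$ somewhat ambiguous --- this is a wrinkle in the statement rather than in your proof.
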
 

The proof of Theorem \ref{thm:CLTselfnormalized} is delayed to Section \ref{subsec:Proof_CLTselfnormalized}.

%\textcolor{red}{
%By Chapter 2 in \cite{KLO12},
%\begin{align*}
%\mathrm{Var}_{\modcpi}(wg) &= 2\langle wg, (-\modcM)^{-1} wg \rangle_{\modcpi} \\
%\left(\dfrac{\modcZ}{Z^0}\right)^2 \mathrm{Var}_{\modcpi\modcpi}(wg) &= 2 \langle \dfrac{\mu}{\modcpi}g, (-\modcM)^{-1} \dfrac{\mu}{\modcpi}g \rangle_{\modcpi} \\ 
%&=2 \langle g, (-\modcM)^{-1} \dfrac{\mu}{\modcpi}g \rangle_{\mu} \\
%&\leq2 \norm{g}^2 \norm{(-\modcM)^{-1}} \norm{\dfrac{\mu}{\modcpi}}.
%\end{align*}}

Our next result concerns the finite-time non-asymptotic deviation of the self-normalized estimator from the target mean $\mu(g)$. To this end, for any $\eta,\varepsilon>0$, $g\in\ell^2(\mu)$,  we define the first time such that the deviation probability is less than or equal to $\varepsilon$ to be:
\begin{align}\label{eq:conctime}
	t_{conc}(\eta, \varepsilon) := \inf \bigg\{t \geq 0;~ 	\sup_{g \in \ell^2(\mu);~\norm{g}_{\infty} \leq a} \sup_{x \in \mathcal{X}}\mP_x\left( \dfrac{\int_0^t w(\miY) g(\miY) \, ds}{\int_0^t w(\miY) \, ds} -  \mu(g) \geq \eta \right) \leq \varepsilon\bigg\}.
\end{align}

\begin{theorem}[Chernoff's bound of the self-normalized estimator]\label{thm:concentrationChernoff}
	Given $\eta, \varepsilon > 0$, $g \in \ell^2(\mu)$ such that $\norm{g}_{\infty} \leq a$, if we have
	$$t \geq \dfrac{\modcZ}{Z^0}\dfrac{32 a^2(1+\eta)^2}{\lambda_2(-\modcM)\eta^2} \ln \left(\dfrac{1}{\varepsilon \min_{x \in \mathcal{X}} \modcpi(x)}\right),$$
	then for any $x \in \mathcal{X}$,
	$$\mP_x\left( \dfrac{\int_0^t w(\miY) g(\miY) \, ds}{\int_0^t w(\miY) \, ds} -  \mu(g) \geq \eta \right) \leq \varepsilon.$$
	In particular,
	\begin{align*}
		t_{conc}(\eta, \varepsilon) = \mathcal{O}\left(|\mathcal{X}|\dfrac{ a^2(1+\eta)^2}{\lambda_2(-\modcM)\eta^2} \ln \left(\dfrac{|\mathcal{X}| \exp\{\max_{x \in \mathcal{X}}\modcH(x)\}}{\varepsilon}\right)\right).
	\end{align*}
	
\end{theorem}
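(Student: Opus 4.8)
The plan is to reduce the deviation bound on the self-normalized ratio estimator to two separate concentration statements for the numerator-type and denominator-type additive functionals, each controlled by a standard spectral-gap-based Chernoff bound for continuous-time reversible Markov chains. First I would observe the key algebraic manipulation: for any threshold $\eta$, the event
\[
\left\{ \dfrac{\int_0^t w(\miY) g(\miY)\,ds}{\int_0^t w(\miY)\,ds} - \mu(g) \ge \eta \right\}
\]
is contained in the event
\[
\left\{ \dfrac{1}{t}\int_0^t w(\miY)\bigl(g(\miY) - \mu(g) - \eta\bigr)\,ds \ge 0 \right\},
\]
so writing $h := w\cdot(g - \mu(g) - \eta)$, it suffices to bound $\mP_x\bigl(\frac1t\int_0^t h(\miY)\,ds \ge 0\bigr)$. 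Since $\modcpi$ is the stationary law of $\modcX$ and $\mu(g) = \modcpi(wg)/\modcpi(w)$ up to the normalization constants (indeed $\modcpi(w) = Z^0/\modcZ$ and $\modcpi(wg) = \mu(g) Z^0/\modcZ$, using $w = \modcZ \mu / (Z^0 \modcpi)$ pointwise), one computes $\modcpi(h) = \modcpi(w g) - (\mu(g)+\eta)\modcpi(w) = -\eta\, Z^0/\modcZ < 0$. Thus $h$ has strictly negative mean under the stationary distribution of the chain $\modcX$, and the problem is exactly a large-deviation-below-the-mean estimate for an additive functional of a reversible continuous-time chain.

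Next I would invoke a Chernoff/Bernstein-type bound for continuous-time reversible Markov chains — e.g.\ of the form $\mP_x\bigl(\frac1t\int_0^t h(\miY)\,ds \ge \modcpi(h) + \gamma\bigr) \le \frac{1}{\sqrt{\min_x \modcpi(x)}}\exp\{-t \lambda_2(-\modcM)\gamma^2/(C\|h\|_\infty^2)\}$ (the Lezaud/Gillman-type inequality, with the $\min_x\modcpi(x)$ factor coming from the chain started at a point rather than at stationarity). Applying this with $\gamma = \eta\, Z^0/\modcZ = -\modcpi(h)$ so that $\modcpi(h) + \gamma = 0$, and bounding $\|h\|_\infty = \|w(g-\mu(g)-\eta)\|_\infty \le \|w\|_\infty (2a + \eta) \le \|w\|_\infty \cdot 2a(1+\eta)$ (using $\|g\|_\infty \le a$ and hence $|g - \mu(g)| \le 2a$, and $\eta \le 2a\eta$ is too crude — rather $2a + \eta \le 2(a + a\eta) = 2a(1+\eta)$ requires $\eta \le 2a\eta$, so more carefully $2a+\eta \le 2a(1+\eta)$ holds whenever $a \ge 1/2$; in general absorb constants), gives an exponent of order $-t\lambda_2(-\modcM)\eta^2 (Z^0/\modcZ)^2 / (C a^2(1+\eta)^2 \|w\|_\infty^2)$. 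The factor $\|w\|_\infty^2 (\modcZ/Z^0)^2$ should collapse: since $w = \modcZ\mu/(Z^0\modcpi)$, one has $\|w\|_\infty \cdot (Z^0/\modcZ)$ is of order $\max_x \mu(x)/\modcpi(x)$, and combined with the $\modcZ/Z^0$ prefactor in the statement this yields the claimed $\frac{\modcZ}{Z^0}\cdot\frac{32 a^2(1+\eta)^2}{\lambda_2(-\modcM)\eta^2}$ after setting the exponent equal to $\ln(1/(\varepsilon \min_x\modcpi(x)))$ and solving for $t$. Finally, the $\mathcal{O}$-statement for $t_{conc}$ follows by bounding $\modcZ/Z^0 = \mathcal{O}(|\mathcal{X}|)$ (each weight is $\mathcal{O}(1)$ relative to normalizations after the $\mcH_{\min}$ cancellation) and $1/\min_x \modcpi(x) \le \modcZ \le |\mathcal{X}|\exp\{\max_x \modcH(x)\}$, then taking sup over $g$ with $\|g\|_\infty \le a$, which is harmless since every constant above depends on $g$ only through $a$.

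The main obstacle I anticipate is not the Markov-chain Chernoff inequality itself — that is off-the-shelf — but rather the bookkeeping of the normalization constants $Z^0$, $\modcZ$, and the sup-norm of $w$, to see that the various ratios combine to give exactly the clean constant $32$ and the stated dependence; in particular one must be careful that the bound is being proven for the chain $\modcX$ (whose invariant measure is $\modcpi$, not $\mu$), and that $\mu(g)$ re-enters correctly through the identity $\mu(g)\,\modcpi(w) = \modcpi(wg)$. A secondary subtlety is justifying the precise form of the continuous-time Chernoff bound with the $(\min_x\modcpi(x))^{-1/2}$ (or $(\min_x\modcpi(x))^{-1}$) initialization penalty and confirming the numerical constant in the exponent is compatible with $32$ after the step from $\|h\|_\infty$ to $2a(1+\eta)\|w\|_\infty$; I would handle this by citing the relevant continuous-time concentration lemma and tracking constants conservatively, accepting any slack into the constant $32$.
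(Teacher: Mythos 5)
Your approach is correct and takes a genuinely different route from the paper's. The paper applies Lezaud's continuous-time Chernoff bound \emph{twice}: once to the functional $wg$ (giving $\mP_x\bigl(\frac{1}{t}\int_0^t w(\miY)g(\miY)\,ds - \modcpi(wg) \geq \gamma\bigr) \leq \varepsilon$) and once to $w$ itself via the specialization $g = -1$ (giving a lower-deviation bound $\mP_x\bigl(\modcpi(w) - \gamma \geq \frac{1}{t}\int_0^t w(\miY)\,ds\bigr) \leq \varepsilon$), then combines the two events (taking $g \geq 0$ without loss of generality) and solves $\eta = \gamma/(\modcpi(w) - \gamma)$ for $\gamma$. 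Your single-shot reduction, writing the deviation event as $\bigl\{\frac{1}{t}\int_0^t h(\miY)\,ds \geq 0\bigr\}$ with $h := w(g - \mu(g) - \eta)$ and observing $\modcpi(h) = -\eta\, Z^0/\modcZ < 0$, applies Lezaud once and dispenses with both the union of two deviation events and the WLOG positivity step. Both routes exploit the same identity $\modcpi(w) = Z^0/\modcZ$, $\modcpi(wg) = (Z^0/\modcZ)\mu(g)$; yours is the cleaner packaging.

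Two bookkeeping points you should tighten. First, the reason $a = \|g\|_\infty$ (rather than $\|h\|_\infty$ with an uncontrolled prefactor) appears in the final bound is that $\|w\|_\infty \leq 1$: since $\modcH(x) \leq \mcH(x)-\mcH_{\min}$ pointwise, $w(x) = e^{\modcH(x)-(\mcH(x)-\mcH_{\min})} \leq 1$. You never invoke this, and without it the step $\|h\|_\infty \leq \|w\|_\infty(2a+\eta)$ does not close. (You also have a sign flip in the pointwise identity: $w = (Z^0/\modcZ)\,\mu/\modcpi$, not $(\modcZ/Z^0)\,\mu/\modcpi$ — the downstream computations $\modcpi(w)=Z^0/\modcZ$ etc.\ are nonetheless correct.) Second, your claim that the normalization factors ``collapse'' to give a single power of $\modcZ/Z^0$ is not substantiated: applying the Chernoff inequality at threshold $\gamma = \eta\,Z^0/\modcZ$ produces a time requirement proportional to $\|h\|_\infty^2/(\lambda_2(-\modcM)\gamma^2)$, whose prefactor involves $(\modcZ/Z^0)^2$, not $\modcZ/Z^0$. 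The paper's own proof, unwound from its choice $\gamma = (Z^0/\modcZ)\,\eta/(1+\eta)$, likewise yields $(\modcZ/Z^0)^2$, so this appears to be a discrepancy between the stated theorem and both proofs rather than a defect peculiar to your route — but you should not assert that your computation reproduces the stated $\modcZ/Z^0$, because as written it does not.
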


The proof of Theorem \ref{thm:concentrationChernoff} can be found in Section \ref{subsec:concentrationChernoff}, which utilizes the results in \cite{Lezaud01}.

\subsection{Proof of Theorem \ref{thm:ConvergenceTradeoff}}
\label{subsec:Proof_Convergencetradeoff}
	First, we prove item \eqref{it:largealpha}. For the lower bound, let $\phi$ be an eigenfunction of $-\modcM$  with eigenvalue $\lambda \neq 0$. For $x \in \mathcal{X}$, we have 
	\begin{align*}
	e^{-\lambda t}|\phi(x)|  =\left|\left(\modcP\right)^t \phi(x)\right| &=\left|\sum_{y \in \mathcal{X}}\left[\left(\modcP\right)^t(x, y) \phi(y)-\modcpi(y) \phi(y)\right]\right|  \\&\leq\|\phi\|_{\infty} 2\left\|\left(\modcP\right)^t(x, \cdot)-\modcpi\right\|_{TV}.
	\end{align*}  
	Taking $x$ that attains $|\phi(x)|=\|\phi\|_{\infty}$ and $\lambda = \lambda_2(-\modcM)$ yield the desired result since
	$$  e^{- \lambda_2(-\modcM)t} \leq 2 \left\|\left(\modcP\right)^t(x,\cdot) -\modcpi\right\|_{TV}  .$$
	
	As for the upper bound, it follows from \cite[equation $(20.21)$]{LPW09} that 
	\begin{align*}
	2\left\|\left(\modcP\right)^t(x, \cdot)-\modcpi\right\|_{TV} &\leq e^{- \lambda_2(-\modcM)t} \sum_{y \in \mathcal{X}} \frac{\modcpi(y)}{\sqrt{\modcpi(y)\modcpi(x)}} \\&\leq \frac{e^{- \lambda_2(-\modcM)t}}{\min_{x\in\mathcal{X}}\modcpi(x) }
	\\& =\frac{\modcZ}{e^{-\max_{x\in\mathcal{X}}\modcH (x)}}e^{- \lambda_2(-\modcM)t}
	\\& \leq e^{\mcH_{\max}-\mcH_{\min}} \modcZ e^{-\lambda_2(-\modcM)t}
	\\&\leq  e^{\mcH_{\max}-\mcH_{\min}} |\mathcal{X}| e^{-\lambda_2(-\modcM)t}.
	\end{align*}
	
	Next, we prove item \eqref{it:smallalpha}. First, we compute the difference of $\modcZ$ and $Z^0$:
	\begin{align}
	\modcZ -Z^0 &=\sum_{x\in \mathcal{X}}e^{-\modcH(x)}-e^{-(\mcH(x)-\mcH_{\min})}\notag\\
	&=\sum_{x:\mcH(x)> c }e^{-\modcH(x)}-e^{-(\mcH(x)-\mcH_{\min})}\notag\\
	&\leq \sum_{x:\mcH(x)> c }\left|e^{-\modcH(x)}-e^{-(\mcH(x)-\mcH_{\min})}\right|\notag\\
	&\leq \sum_{x:\mcH(x)> c }\left| { \modcH(x)}- { (\mcH(x)-\mcH_{\min})}\right|\notag\\
	&= \sum_{x:\mcH(x)> c }\left|\int_{\mcH_{\min}}^{\mcH(x)}\left( \frac{1}{\alpha f\left((u-c)_{+}\right)+1}-1 \right)du\right|\notag\\
	&\leq  \sum_{x:\mcH(x)> c } \int_{\mcH_{\min}}^{\mcH(x)}   {\alpha f\left((u-c)_{+}\right)}    du \notag \leq\alpha {\xi } 
	\end{align}
	where $\xi := |{x:\mcH(x)> c }|  ({\mcH_{\max}-\mcH_{\min}})    {f((\mcH_{\max}-c)_{+})} $. Using the bound above, we calculate the total variation distance between $\modcpi$ and $\mu$:
	\begin{align*}
	\|\modcpi-\mu\|_{TV}&=\frac12 \sum_{x\in\mathcal{X}}|\modcpi(x)-\mu(x)|\\
	&=\frac12  \sum_{x\in\mathcal{X}}\left|\frac{e^{-\modcH(x)}}{\modcZ}-\frac{e^{-(\mcH(x)-\mcH_{\min})}}{Z^0 } \right|\\
	&=\frac12  \sum_{x\in\mathcal{X}}\left|\frac{e^{-\modcH(x)}}{\modcZ}-\frac{e^{-(\mcH(x)-\mcH_{\min})}}{\modcZ}+\frac{e^{-(\mcH(x)-\mcH_{\min})}}{\modcZ }-\frac{e^{-(\mcH(x)-\mcH_{\min})}}{Z^0} \right|\\
	&\leq \frac12 \frac{\modcZ - Z^0}{\modcZ} + \frac12 \frac{\modcZ - Z^0}{\modcZ} \leq \dfrac{\alpha \xi}{|\mathcal{X}|} e^{\max_x\modcH(x)} \leq \dfrac{\alpha \xi}{|\mathcal{X}|} e^{\mcH_{\max}-\mcH_{\min}} = \mathcal{O}(\alpha).
	\end{align*}
	
%	\textcolor{red}{
%		\begin{align*}
%		\|\modcpi-\mu\|_{TV}&=\frac12 \sum_{x\in\mathcal{X}}|\modcpi(x)-\mu(x)|\\
%		&=\frac12  \sum_{x\in\mathcal{X}}\left|\frac{e^{-\modcH(x)}}{\modcZ}-\frac{e^{-(\mcH(x)-\mcH_{\min})}}{Z } \right|\\
%		&=\frac12  \sum_{x\in\mathcal{X}}\left|\frac{e^{-\modcH(x)}}{\modcZ}-\frac{e^{-\modcH(x)}}{Z }+\frac{e^{-\modcH(x)}}{Z }-\frac{e^{-(\mcH(x)-\mcH_{\min})}}{Z} \right|\\
%		&\leq \frac12 \sum_{x\in\mathcal{X}}e^{-\modcH(x)}\left|\frac{1}{\modcZ}-\frac{1}{Z} \right|+\frac{1}{2Z } \sum_{x\in\mathcal{X}}\left|e^{-\modcH(x)}-e^{-(\mcH(x)-\mcH_{\min})} \right|\\
%		&\leq \frac12 \sum_{x\in\mathcal{X}}1\cdot \frac{1}{|\bm{x}_{\min}|^2}\alpha\xi +\frac{1}{2|\bm{x}_{\min}|}\alpha\xi \\
%		&= \frac12 |\mathcal{X}| \frac{1}{|\bm{x}_{\min}|^2}\alpha\xi +\frac{1}{2|\bm{x}_{\min}|}\alpha\xi =\mathcal{O}(\alpha).
%		\end{align*}} 

\subsection{Proof of Theorem \ref{thm:CLTselfnormalized}}
\label{subsec:Proof_CLTselfnormalized} 
First, we prove item (\ref{it:strongConsistencyselfNomalized}). Using the Markov chain law of large number, we have
	$$\dfrac{\modcZ}{Z^0} \dfrac{1}{t} \int_0^t w(\miY) g(\miY)\,ds \rightarrow^{a.s.} \dfrac{\modcZ}{Z^0} \modcpi(wg) = \mu(g).$$ 
	Applying the above result to $g = 1$ yields
	$$\dfrac{\modcZ}{Z^0} \int_0^t w(\miY) \, ds \rightarrow^{a.s.} \dfrac{\modcZ}{Z^0} \modcpi(w) = \mu(1) = 1.$$
	Collecting the two results above together with the continuous mapping theorem gives the desired result.
	%	where normalizers \[
	%	\modcZ=\sum_{i=0}^{n-1} e^{-\mcH_{1,\alpha,c,i}}, \quad\text{  and }\quad Z^0=\sum_{i=0}^{n-1} e^{-\mcH_{i}}.
	%	\]
	
Next, we prove item (\ref{it:CLTselfNomalized}).
	Using the Markov chain central limit theorem, we have
	$$\sqrt{t} \left(\frac1t \int_0^t w(\miY) g(\miY)\,ds - \modcpi(wg)\right) \rightarrow^{d} N(0,\mathrm{Var}_{\modcpi}(wg)).$$
	By Slutsky's theorem, as $\frac1t\int_0^t w(\miY)\, ds \rightarrow^{a.s.} \dfrac{Z^0}{\modcZ}$, the left-hand-side can be expressed as
	$$\sqrt{t} \left(\dfrac{\int_0^t w(\miY) g(\miY) \, ds}{\int_0^t w(\miY) \, ds}- \dfrac{\modcZ}{Z^0} \modcpi(wg)\right) \rightarrow^{d} \dfrac{\modcZ}{Z^0} N(0,\mathrm{Var}_{\modcpi}(wg)).$$
	Finally, using $\dfrac{\modcZ}{Z^0} \modcpi(wg) = \mu(g)$ gives the desired result since
	$$\sqrt{t} \left( \dfrac{\int_0^t w(\miY) g(\miY) \,ds}{\int_0^t w(\miY) \, ds}- \mu(g)\right) \rightarrow^{d} \dfrac{\modcZ}{Z^0} N(0,\mathrm{Var}_{\modcpi}(wg)).$$ 

\subsection{Proof of Theorem \ref{thm:concentrationChernoff}}
\label{subsec:concentrationChernoff} 
	Using equation $(1.2)$ in \cite{Lezaud01}, for $g$ such that $\norm{g}_{\infty} \leq a$, if we have
	$$t \geq \dfrac{32 a^2}{\lambda_2(-\modcM)\gamma^2} \ln \left(\dfrac{1}{\varepsilon \min_{x \in \mathcal{X}} \modcpi(x)}\right),$$
	then	
	\begin{align*}
	\mP_x\left(\dfrac{1}{t}\int_0^t w(\miY) g(\miY) \, ds - \modcpi(wg) \geq \gamma \right) &\leq \varepsilon, \\
	\mP_x\left( \dfrac{1}{t}\int_0^t w(\miY) g(\miY) \, ds - \dfrac{Z^0}{\modcZ} \mu(g) \geq \gamma \right) &\leq \varepsilon.	
	\end{align*}
	
	Specializing into $g = -1$ gives, for 
	$$t \geq \dfrac{32 }{\lambda_2(-\modcM)\gamma^2} \ln \left(\dfrac{1}{\varepsilon \min_{x \in \mathcal{X}} \modcpi(x)}\right),$$
	then 
	\begin{align*}
	\mP_x\left(  \dfrac{Z^0}{\modcZ} -\gamma \geq \dfrac{1}{t}\int_0^t w(\miY)  \, ds  \right) &\leq \varepsilon.
	\end{align*}
	From now on, without loss of generality, we assume that $g \geq 0$, and so
	\begin{align*}
	\mP_x&\left(\dfrac{\int_0^t w(\miY) g(\miY) \, ds}{\int_0^t w(\miY) \, ds} - \mu(g) \geq \dfrac{\gamma}{\dfrac{Z^0}{\modcZ} -\gamma}\right) \\
	&\leq \mP_x\left(\dfrac{\int_0^t w(\miY) g(\miY) \, ds}{\int_0^t w(\miY) \, ds} - \mu(g) \geq \dfrac{\gamma + \gamma\mu(g)}{\dfrac{Z^0}{\modcZ} -\gamma}\right) \leq \varepsilon.
	\end{align*}
	The desired result follows by taking
	$$\eta := \dfrac{\gamma}{\dfrac{Z^0}{\modcZ} -\gamma},$$
	and rearranging gives $$\gamma = \dfrac{Z^0}{\modcZ} \dfrac{\eta }{1  + \eta}.$$

\section{Time-dependent penalty $(\alpha_t)_{t \geq 0}$}\label{sec:timedpenalty}

In Section \ref{sec:constantpenalty}, we focus on the constant penalty parameter case: we run a homogeneous Markov chain $\modcX = (\modcX(t))_{t \geq 0}$ at a fixed $\alpha$, and correct the bias by proposing a self-normalized estimator. In this Section, instead of fixing $\alpha$, we anneal the penalty parameter by sending $\alpha_t \to 0$ as the time $t \to \infty$. This results in running a \textit{non-homogeneous} Markov chain $\modX = (\modX(t))_{t \geq 0}$ to approximately sample from $\mu$, where $\bm{\alpha} = (\alpha_t)_{t \geq 0}$. Unlike simulated annealing where the temperature parameter is annealed, we instead anneal the penalty parameter so that the state-dependent effect diminishes to zero in the long run, which allows for approximate sampling from the target distribution $\mu$.
%Let
%$$
%\modpi:=(\modpi(x))_{x\in\mathcal{X}},
%\quad
%\modpi(x):=\frac{e^{-\mcH_{1, \alpha_t, c}^{f}(x)}}{\sum_{y \in \mathcal{X}} e^{-\modH(y)} \triangleq Z_{1, \alpha_{t}, c}^{f}}
%$$

In the following, we shall denote, for $x \in \mathcal{X}$ and $A \subseteq \mathcal{X}$,
$$(\modP)^t(x,A) := e^{\int_0^t M^f_{1,\alpha_s,c} \, ds} (x,A) = \mathbb{P}_x(\modX(t) \in A).$$

Our first main result states that the non-homogeneous chain $\modX$ is weakly ergodic as long as $\alpha'_{t} \to 0$. In other words, if the change of $\alpha_t$ approaches $0$ as $t \to \infty$, the non-homogeneous chain tracks closely its instantaneous stationary distribution $\modpi$:
\begin{theorem}[Weak ergodicity of $\modX$]\label{thm:TVmixingMod}
	 If $\alpha'_{t} \to 0$ as $t\to \infty$, then for any $x \in \mathcal{X}$,
	\begin{equation}\label{eq:TVmixingMod}
	\left\|\left(\modP\right)^t(x,\cdot)-\modpi \right\|_{TV}\to 0  \text{ as } t\to \infty.
	\end{equation}
\end{theorem}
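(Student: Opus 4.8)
The plan is to run an \emph{adiabatic-type} argument built on two facts already available: by Lemma~\ref{lem:spectralgap} (equivalently Corollary~\ref{cor:spectralgap}) every instantaneous generator has spectral gap bounded below by the $t$-independent constant $\lambda_\star := Ce^{-m^0}>0$, so a \emph{frozen} copy of the dynamics equilibrates at a uniform exponential rate; and the hypothesis $\alpha'_t\to 0$ forces the genuine non-homogeneous transition kernel $P^{s,t}$ over a sliding window of moderate length to stay close to such a frozen copy. Write $\beta(s):=\sup_{u\ge s}|\alpha'_u|$, which is nonincreasing with $\beta(s)\to 0$; if $\beta(s_0)=0$ for some $s_0$ then the chain is eventually homogeneous and \eqref{eq:TVmixingMod} is immediate from Theorem~\ref{thm:ConvergenceTradeoff}\eqref{it:largealpha}, so assume $\beta>0$.

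First I would assemble the ingredients. (i) Averaging the upper bound of Theorem~\ref{thm:ConvergenceTradeoff}\eqref{it:largealpha} over the starting state and using $\lambda_2(-M^f_{1,a,c})\ge\lambda_\star$ gives, for \emph{every} probability measure $\nu$, every fixed penalty value $a\ge 0$ and every $\tau\ge 0$,
\[
\bigl\| \nu\, e^{\tau M^f_{1,a,c}} - \mu^f_{1,a,c} \bigr\|_{TV}\ \le\ A\,e^{-\lambda_\star\tau},\qquad A:=\tfrac12 e^{\mcH_{\max}-\mcH_{\min}}|\mathcal{X}|,
\]
with $A$ independent of $a$. (ii) Since $\partial_a\mcH^f_{1,a,c}(x)=-\int_{\mcH_{\min}}^{\mcH(x)} f((u-c)_+)\bigl(a f((u-c)_+)+1\bigr)^{-2}\,du$ is bounded in absolute value, uniformly in $a\ge 0$ and $x$, by $(\mcH_{\max}-\mcH_{\min})f((\mcH_{\max}-c)_+)$, the map $a\mapsto M^f_{1,a,c}$ is Lipschitz entrywise, and — repeating the computation in the proof of Theorem~\ref{thm:ConvergenceTradeoff}\eqref{it:smallalpha} with $0$ replaced by a second value of $a$ — the map $a\mapsto\mu^f_{1,a,c}$ is Lipschitz in total variation; call the common model-dependent Lipschitz constant $L$.

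Next I would freeze the window $[s,s+\tau]$ at its left endpoint. Duhamel's formula gives $P^{s,s+\tau}-e^{\tau M^f_{1,\alpha_s,c}}=\int_s^{s+\tau} P^{s,u}\bigl(M^f_{1,\alpha_u,c}-M^f_{1,\alpha_s,c}\bigr)e^{(s+\tau-u)M^f_{1,\alpha_s,c}}\,du$, and since stochastic kernels contract the total variation norm, this and (ii) yield, for any $\nu$, $\bigl\|\nu P^{s,s+\tau}-\nu e^{\tau M^f_{1,\alpha_s,c}}\bigr\|_{TV}\le L\int_s^{s+\tau}|\alpha_u-\alpha_s|\,du\le L\,\tau^2\beta(s)$. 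Now fix large $t$ and a window length $\tau=\tau(t)\in(0,t/2]$ to be chosen, abbreviate $\nu_{t-\tau}:=(\modP)^{t-\tau}(x,\cdot)$, and combine (i) for the frozen kernel at penalty $\alpha_{t-\tau}$, the freezing estimate, and the Lipschitz bound on $a\mapsto\mu^f_{1,a,c}$:
\[
\bigl\|(\modP)^t(x,\cdot)-\modpi\bigr\|_{TV}\ \le\ \underbrace{L\,\tau^2\beta(t-\tau)}_{\text{freezing}}\ +\ \underbrace{A\,e^{-\lambda_\star\tau}}_{\text{equilibration}}\ +\ \underbrace{L\,|\alpha_{t-\tau}-\alpha_t|}_{\text{drift of }\modpi}\ \le\ L\,\tau^2\beta(t/2)+A\,e^{-\lambda_\star\tau}+L\,\tau\,\beta(t/2),
\]
using $\beta(t-\tau)\le\beta(t/2)$ and $|\alpha_{t-\tau}-\alpha_t|\le\tau\beta(t-\tau)$. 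Choosing $\tau(t):=\min\{t/2,\ \beta(t/2)^{-1/4}\}$ makes $\tau(t)\to\infty$ (so the equilibration term vanishes) while $\tau(t)^2\beta(t/2)\le\max\{4t^{-2},\ \beta(t/2)^{1/2}\}\to 0$ and $\tau(t)\beta(t/2)\to 0$, so every term on the right tends to $0$ and \eqref{eq:TVmixingMod} follows.

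The step I expect to be the main obstacle is the freezing estimate: making the Duhamel comparison between the time-ordered kernel $P^{s,s+\tau}$ and the frozen matrix exponential fully rigorous — justifying the integral identity and the forward equation for the continuous generator family, bounding the appropriate operator norm acting on signed measures, and checking that the resulting constant is genuinely independent of $t$ and of $\nu$ — and then trading $\tau$ against $\beta$ so that the freezing error $\tau^2\beta$ and the equilibration error $e^{-\lambda_\star\tau}$ vanish \emph{simultaneously}; the remainder is bookkeeping on top of Lemma~\ref{lem:spectralgap}/Corollary~\ref{cor:spectralgap} and Theorem~\ref{thm:ConvergenceTradeoff}.
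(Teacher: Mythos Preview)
Your argument is correct and self-contained, but it follows a genuinely different route from the paper. The paper's own proof is a one-line application of a classical black-box criterion \cite[Theorem~A]{Gidas85}: it suffices to exhibit $(\gamma_t)$ with $\bigl|\tfrac{d}{dt}\modpi(x)\bigr|\le \gamma_t\,\modpi(x)$, $\gamma_t/\lambda_2(-\modM)\to 0$, and $\int_0^\infty \lambda_2(-\modM)\,dt=\infty$. The paper computes $\tfrac{d}{dt}\modpi$ explicitly, bounds it by $\gamma_t=-\alpha_t'\,f(\mcH_{\max}-c)(\mcH_{\max}-\mcH_{\min})$, and then Corollary~\ref{cor:spectralgap} delivers the last two conditions instantly since $\lambda_2$ is bounded below by a $t$-free constant and $\alpha_t'\to 0$.

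By contrast, you essentially \emph{reprove} a version of Gidas's criterion from scratch via the adiabatic window argument: Duhamel's formula compares the genuine kernel on $[t-\tau,t]$ with a frozen one, the uniform gap $\lambda_\star$ handles equilibration, and Lipschitz continuity of $a\mapsto\mu^f_{1,a,c}$ controls the drift of the target; then you balance $\tau$ against $\beta(t/2)$. What you gain is a proof that is independent of the external reference and that makes the quantitative trade-off (freezing error $\tau^2\beta$ versus equilibration $e^{-\lambda_\star\tau}$) completely explicit, which is pedagogically useful and in principle gives rates. What the paper gains is brevity: once Gidas is cited, everything reduces to the single derivative computation for $\modpi$ and the already-established Corollary~\ref{cor:spectralgap}. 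Both routes ultimately hinge on the same two inputs---the uniform spectral-gap floor and the vanishing of $\alpha_t'$---so neither is more general in scope here.
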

The proof of Theorem \ref{thm:TVmixingMod} is given in Section \ref{subsec:Proof_TVmixingMod}. 

%For large  enough $c$ such that $\mcH(x)\leq c$ for $\forall x\in \mathcal{X}$,
%\[\modpi(x)=
%\mu (x)=\frac{e^{-\mcH(x)}}{\sum_{y \in \mathcal{X}} e^{-\mcH(y)}}=\frac{e^{-(\mcH(x)-\mcH_{\min})}}{\sum_{y\in\mathcal{X}}e^{-(\mcH(y)-\mcH_{\min})}\overset{\triangle}{=}Z } .
%\]

If we impose the additional assumption that $\alpha_t \downarrow 0$, then the non-homogeneous chain $\modX$ is strongly ergodic and converges to the desired target distribution $\mu$ in total variation distance:
\begin{theorem}[Strong ergodicity of $\modX$ ]\label{thm:TVmixingStationary}
	If $\alpha_t\downarrow0,~\alpha'_{t} \to 0$ as $t\to\infty$, then for any $x \in \mathcal{X}$,
	\[
	\left\|\left(\modP\right)^t(x,\cdot)-\mu  \right\|_{TV} \to 0\text{ as } t\to\infty.	\]
%	where  $$\|\modpi-\mu  \|_{TV}=\mathcal{O}(\alpha_t)$$ and \begin{align*}
%	\|\left(\modP\right)^t(x,\cdot)-\modpi\|_{TV}  \leq \mathcal{O}\left(e^{-\frac12\int_0^t\zeta_sds}\right)
%	\end{align*}with  $\zeta_t=2\lambda_{2}(-\modM) - \gamma_t$.
\end{theorem}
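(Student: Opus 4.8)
The plan is to combine the weak ergodicity already established in Theorem \ref{thm:TVmixingMod} with the convergence of the instantaneous stationary distributions $\modpi$ to $\mu$, via the triangle inequality
\[
\left\|\left(\modP\right)^t(x,\cdot)-\mu\right\|_{TV} \leq \left\|\left(\modP\right)^t(x,\cdot)-\modpi\right\|_{TV} + \left\|\modpi - \mu\right\|_{TV}.
\]
The first term on the right tends to $0$ as $t \to \infty$ by Theorem \ref{thm:TVmixingMod}, since the hypothesis $\alpha'_t \to 0$ is in force. So the whole task reduces to showing that the second term, $\left\|\modpi-\mu\right\|_{TV}$, tends to $0$ as $t \to \infty$ under the additional assumption $\alpha_t \downarrow 0$.

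For that second term I would invoke item \eqref{it:smallalpha} of Theorem \ref{thm:ConvergenceTradeoff}: for each fixed value of the penalty parameter $\alpha$, we have the quantitative bound $\left\|\mu^f_{1,\alpha,c}-\mu\right\|_{TV} = \mathcal{O}(\alpha)$. Inspecting the proof of that theorem in Section \ref{subsec:Proof_Convergencetradeoff}, the bound is in fact explicit: $\left\|\mu^f_{1,\alpha,c}-\mu\right\|_{TV} \leq \frac{\alpha \xi}{|\mathcal{X}|}e^{\mcH_{\max}-\mcH_{\min}}$ with $\xi = |\{x:\mcH(x)>c\}|(\mcH_{\max}-\mcH_{\min})f((\mcH_{\max}-c)_+)$, a constant depending only on the fixed data $\mcH$, $c$, $f$, $\mathcal{X}$ and not on $\alpha$. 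Substituting $\alpha = \alpha_t$ gives $\left\|\modpi-\mu\right\|_{TV} \leq C' \alpha_t$ for a constant $C'$ independent of $t$, and since $\alpha_t \downarrow 0$ this upper bound vanishes as $t \to \infty$. Combining with the previous paragraph completes the argument.

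The main (and only real) obstacle is a bookkeeping one: making sure the constant $\xi$ extracted from the proof of Theorem \ref{thm:ConvergenceTradeoff} genuinely does not depend on $\alpha$, so that the bound $\mathcal{O}(\alpha)$ can be applied uniformly along the schedule $(\alpha_t)_{t\geq 0}$ rather than only for one fixed $\alpha$; this is immediate from the displayed estimate. One should also note that the hypotheses of Theorem \ref{thm:TVmixingStationary} include $\alpha'_t \to 0$, which is exactly what is needed to apply Theorem \ref{thm:TVmixingMod}, so both pieces of the triangle inequality are controlled under the stated assumptions, and no further regularity on the schedule is required.
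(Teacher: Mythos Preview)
Your proposal is correct and follows essentially the same approach as the paper: the paper's proof uses exactly the triangle-inequality decomposition you give, invokes Theorem \ref{thm:TVmixingMod} for the first term, and cites Theorem \ref{thm:ConvergenceTradeoff} to conclude $\|\modpi-\mu\|_{TV}=\mathcal{O}(\alpha_t)\to 0$ for the second. Your additional remark that the implicit constant $\xi$ in the $\mathcal{O}(\alpha)$ bound is independent of $\alpha$ is a helpful sanity check, though the paper leaves this implicit.
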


Unlike simulated annealing where logarithmic cooling schedule of the temperature is commonly employed to ensure strong ergodicity \cite{C21MPRF,Miclo92AIHP,M18}, in this context the penalty parameter can decay to zero at a faster rate than logarithmic cooling while maintaining strong ergodicity. For instance, we can adopt the exponential annealing schedule by taking $\alpha_t = e^{-t}$. With this choice, we note that the conditions in Theorem \ref{thm:TVmixingStationary} are satisfied.

The proof of Theorem \ref{thm:TVmixingStationary} is deferred to Section \ref{subsec:Proof_TVmixingStationary}.

%\begin{rk}
%	If $\int_0^\infty \zeta_tdt=\infty$, then $\lim_{t\to\infty}F_t(x)=0$, for any $x\in\mathcal{X}$.
%\end{rk}

In our next result, we quantify the convergence rate of the total variation mixing by specializing into the case where $f(x) = x^2$. The primary reason for considering quadratic $f$ is that the reduced critical height can be bounded independently of the system size, leading to rapid mixing in the examples to be presented in Section \ref{sec:apps}.

We would be interested in estimating \textit{the total variation mixing time} $t_{mix}^f(\varepsilon)$ of the non-homogeneous chain $\modX$, defined to be for $\varepsilon > 0$,
\begin{align}\label{eq:nonhomotmix}
	t_{mix}^f(\varepsilon) := \inf \bigg\{t \geq 0;~ 	\max_{x \in \mathcal{X}} \left\|\left(\modP\right)^t(x,\cdot)-\mu \right\|_{TV} \leq \varepsilon\bigg\}.
\end{align}

Note that the dependence on $\bm{\alpha}$ and $c$ are suppressed on the left hand side.
 
\begin{theorem}[Total variation mixing time with $f(x) = x^2$ and logarithmic annealing schedule]\label{thm:Mixingx2}
	Suppose that $ f(x)=x^2$, where we recall $\modcH$ is computed in \eqref{eq:Hal_x2}. For a large enough universal constant $C > 0$ that is independent of $\mathcal{X}$ or $\mcH$, denote 
	$$
	\beta_{t}:=\frac1{\alpha_{t}}=\frac{2}{3\pi} \ln (e^{\frac{3\pi}{2}}+p t), \quad p:=\frac{\pi K}{\mathcal{M}}, \quad \mathcal{M} := C\left((\ln |\mathcal{X}|)\vee \left(\mcH_{\max }-\mcH_{\min }\right)\right).
	$$
	For $t \geq 0$, we employ the following logarithmic annealing schedule on the penalty parameter with
	$$\alpha_t := \frac{3\pi}{2} \dfrac{1}{\ln (e^{\frac{3\pi}{2}}+p t)}.$$
	Assume further that there exists a positive constant $K = K(c)$ such that 
	\begin{equation}\label{eq:conditionK}
		\lambda_{2}(-\modM)\geq Ke^{-\frac{3\pi}{2}\sqrt{\beta_t}}\geq Ke^{-\frac{3\pi}{2}\beta_t},
	\end{equation} 
	\textcolor{black}{and we choose $c$ such that $c - \mcH_{\min} = \delta = \mathcal{O}(1)$.}	For any $x \in \mathcal{X}$, we have 
	$$
	\left\|\left(\modP \right)^t(x,\cdot)-\mu\right\|_{TV} = \textcolor{black}{\mathcal{O}(|\mathcal{X}| e^{\mathcal{M}}e^{- \mathcal{M} \beta_{t}}).}
	$$
	In particular, 
	\textcolor{black}{
	\begin{align*}
		t_{mix}^f(\varepsilon) &= \mathcal{O}\left( \dfrac{1}{p} \left(\exp\bigg\{\dfrac{3\pi( \ln |\mathcal{X}| + \mathcal{M} + \log(1/\varepsilon))}{2\mathcal{M}}\bigg\} - \exp\{3\pi/2\}\right)\right) \\
		&= \mathcal{O}\left(\dfrac{(\ln |\mathcal{X}|)\vee \left(\mcH_{\max }-\mcH_{\min }\right)}{K} \left(\dfrac{1}{\varepsilon}\right)^{\frac{3\pi}{2\left((\ln |\mathcal{X}|)\vee \left(\mcH_{\max }-\mcH_{\min }\right)\right)}}\right).
	\end{align*}}
\end{theorem}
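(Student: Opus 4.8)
The plan is to invoke a standard non-homogeneous convergence estimate that combines the instantaneous spectral gap with the total variation drift of the instantaneous stationary distributions, and then to feed in the quadratic-$f$ structure. Write the triangle-inequality split
\[
\left\|\left(\modP\right)^t(x,\cdot)-\mu\right\|_{TV} \leq \left\|\left(\modP\right)^t(x,\cdot)-\modpi\right\|_{TV} + \left\|\modpi - \mu\right\|_{TV},
\]
where the second term is $\mathcal{O}(\alpha_t)$ by Theorem \ref{thm:ConvergenceTradeoff}\eqref{it:smallalpha}, hence negligible compared to the claimed rate since $\alpha_t\to 0$ logarithmically. For the first term, the key estimate is the classical bound for time-inhomogeneous reversible chains (as in the proof of strong ergodicity in Section \ref{subsec:Proof_TVmixingStationary}): for $0\le s\le t$,
\[
\left\|\left(\modP\right)^t(x,\cdot)-\modpi\right\|_{TV} \lesssim \frac{1}{\sqrt{\min_y \mu^f_{1,\alpha_t,c}(y)}}\, \exp\left(-\int_s^t \lambda_2(-M^f_{1,\alpha_u,c})\,du\right) + \int_s^t \left\|\frac{d}{du}\mu^f_{1,\alpha_u,c}\right\|_{TV} du,
\]
or a suitable variant thereof. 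The crux is that \eqref{eq:Hal_x2} with $c-\mcH_{\min}=\delta=\mathcal{O}(1)$ gives $\max_x \modcH(x) \le \delta + \frac{\pi}{2\sqrt{\alpha_t}} = \delta + \frac{\pi}{2}\sqrt{\beta_t}$, so $\min_y \mu^f_{1,\alpha_t,c}(y) \ge |\mathcal{X}|^{-1} e^{-\delta - (\pi/2)\sqrt{\beta_t}}$, and the prefactor is controlled by $\sqrt{|\mathcal{X}|}\,e^{\mathcal{O}(\sqrt{\beta_t})}$.

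Next I would plug the hypothesis \eqref{eq:conditionK}, $\lambda_2(-\modM) \ge K e^{-\frac{3\pi}{2}\sqrt{\beta_t}}$, into the exponential integral. The point of the specific schedule $\beta_t = \frac{2}{3\pi}\ln(e^{3\pi/2}+pt)$ is that it makes $\frac{3\pi}{2}\sqrt{\beta_t}$ — or, after the second inequality in \eqref{eq:conditionK}, the cruder $\frac{3\pi}{2}\beta_t$ — grow exactly like $\ln(e^{3\pi/2}+pt)$, so $e^{-\frac{3\pi}{2}\beta_t} = (e^{3\pi/2}+pt)^{-1}$ and the cumulative gap integral $\int_0^t K e^{-\frac{3\pi}{2}\beta_u}\,du = \frac{K}{p}\ln\!\big(1+\frac{pt}{e^{3\pi/2}}\big)$, which is logarithmic in $t$; the choice $p = \pi K/\mathcal{M}$ tunes the resulting polynomial-decay exponent so that, after absorbing the prefactor $\sqrt{|\mathcal{X}|}e^{\mathcal{O}(\sqrt{\beta_t})}$, the net bound becomes $\mathcal{O}(|\mathcal{X}| e^{\mathcal{M}} e^{-\mathcal{M}\beta_t})$. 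I would verify that the $\alpha'_u$ (equivalently $\beta_u'/\beta_u^2$) drift term $\int \|\tfrac{d}{du}\modpi\|_{TV}\,du$ is also logarithmic and dominated, using $\|\tfrac{d}{du}\modpi\|_{TV} = \mathcal{O}(|\alpha'_u|\cdot(\text{something polynomial in }\beta_u))$ together with $\alpha'_u = \Theta(p/(u\ln^2 u))$, so that it integrates to something $o(\mathcal{M}\beta_t)$ or at worst of the same order; choosing the universal constant $C$ in $\mathcal{M}$ large enough absorbs all such lower-order contributions.

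Finally, the mixing-time statement follows by inverting $|\mathcal{X}|e^{\mathcal{M}}e^{-\mathcal{M}\beta_t} \le \varepsilon$: this requires $\beta_t \ge \frac{1}{\mathcal{M}}(\ln|\mathcal{X}| + \mathcal{M} + \ln(1/\varepsilon))$, i.e. $\ln(e^{3\pi/2}+pt) \ge \frac{3\pi}{2\mathcal{M}}(\ln|\mathcal{X}| + \mathcal{M} + \ln(1/\varepsilon))$, and solving for $t$ gives the first displayed expression for $t_{mix}^f(\varepsilon)$; substituting $p = \pi K/\mathcal{M}$ and $\mathcal{M} = C((\ln|\mathcal{X}|)\vee(\mcH_{\max}-\mcH_{\min}))$ and simplifying the exponential of a logarithm into a power of $1/\varepsilon$ yields the second form. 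I expect the main obstacle to be making the non-homogeneous estimate rigorous with the correct form of the drift term and confirming that its contribution, along with the square-root-vs-linear slack between the two inequalities in \eqref{eq:conditionK} and the $e^{\mathcal{O}(\sqrt{\beta_t})}$ prefactor growth, is genuinely absorbed by the $e^{\mathcal{M}}$ factor for $C$ large — in other words, the bookkeeping that separates the "real" rate $e^{-\mathcal{M}\beta_t}$ from all subexponential-in-$\beta_t$ corrections. The algebra converting the schedule into a clean polynomial rate is routine once that separation is in place.
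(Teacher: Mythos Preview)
Your plan gets the analysis of the schedule and the final inversion right, but the route differs from the paper's in one structural way, and that difference is exactly the ``bookkeeping'' you flag as the main obstacle.

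The paper does \emph{not} split via the triangle inequality through $\modpi$ and does not use an additive estimate of the form $\exp(-\int\lambda_2)+\int\|\tfrac{d}{du}\modpi\|_{TV}\,du$. Instead it follows the $\ell^2$-Gronwall method of \cite{DM94}: it sets
\[
u_t(x):=\sum_{y}\Bigl(g_t(x,y)-\tfrac{\mu(y)}{\modpi(y)}\Bigr)^2\modpi(y),\qquad g_t(x,y):=\tfrac{(\modP)^t(x,y)}{\modpi(y)},
\]
bounds the total variation by $\tfrac12\sqrt{u_t}$ via Cauchy--Schwarz, and differentiates to obtain
\[
\tfrac{d}{dt}u_t=-2\langle -\modM g_t,g_t\rangle_{\modpi}-\sum_y\Bigl(g_t-\tfrac{\mu}{\modpi}\Bigr)^2\tfrac{d}{dt}\modpi(y).
\]
The drift term thus enters \emph{multiplicatively} through $\bigl|\tfrac{d}{d\beta}\ln\modpi\bigr|=\mathcal O\bigl(\tfrac{1}{\sqrt\beta}(\mcH_{\max}-\mcH_{\min})\bigr)$, yielding a clean differential inequality $\tfrac{d}{dt}u_t\le -2\bigl(Ke^{-\frac{3\pi}{2}\beta_t}-\tfrac{\mathcal M}{2}\beta_t'\bigr)u_t$. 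The logarithmic schedule is then chosen precisely so that this bracket equals $\mathcal M\beta_t'$, and Gronwall gives $u_t\le u_0e^{-2\mathcal M(\beta_t-1)}$ with $u_0=\mathcal O(|\mathcal X|^2)$ under $c-\mcH_{\min}=\mathcal O(1)$.

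What this buys over your route: the additive integral $\int_0^t\|\tfrac{d}{du}\modpi\|_{TV}\,du$ in your proposed estimate is, with the bound from the proof of Theorem~\ref{thm:TVmixingMod}, of order $(\mcH_{\max}-\mcH_{\min})^3(\alpha_0-\alpha_t)$, a constant that does not vanish; to make your version work you would have to iterate over short intervals (or pass to the multiplicative form), which is exactly what the differential inequality does automatically. Your approach can be made to work, but the paper's $\ell^2$-Gronwall argument targeting $\mu$ directly sidesteps both the triangle split and the additive-drift bookkeeping you anticipate.
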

The proof of Theorem \ref{thm:Mixingx2} is given in Section \ref{subsec:Proof_Mixingx2}.

 \subsection{Proof of Theorem \ref{thm:TVmixingMod}}
 \label{subsec:Proof_TVmixingMod} 
 	In view of \cite[Theorem $A$]{Gidas85}, it suffices to find $(\gamma_t)_{t \geq 0}$ that satisfies \begin{equation}\label{eq:conditionBounddpi}
 	\left|\frac{d}{d t} \modpi(x)\right| \leqslant
 	\gamma_{t} \cdot \modpi(x),
 	\end{equation} where  $\gamma_t$ is independent of $x$ and fulfills 
 	\begin{equation}\label{eq:conditionGammaSpegap}
 	\lim _{t \rightarrow \infty} \frac{\gamma_t}{\lambda_2(-\modM)}=0
 	\end{equation}
 	with 
 	\begin{align}\label{eq:conditionspegap}
 		\int_{0}^{ \infty} \lambda_2(-\modM) d t=+\infty.
 	\end{align}
 	
 	First, \eqref{eq:conditionspegap} is satisfied due to Corollary \eqref{cor:spectralgap}. We proceed to show \eqref{eq:conditionBounddpi}. Denote the negative time derivative of $\modH$ as
 	$$  
 	G_{1,\alpha_t,c}^f(x):=- \frac{d}{dt}\mcH_{1,\alpha_t,c}^f(x)\leq 0.$$
 	We observe that
 	$$
 	G_{1,\alpha_t,c}^f(x)=\int_{\mcH_{\min}}^{\mcH(x)}\frac{\alpha'_tf((u-c)_+)}{(\alpha_tf\left((u-c)_{+}\right)+1)^2}du \geq \alpha_t'f(\mcH_{\max}-c)(\mcH_{\max}-\mcH_{\min})
 	$$
 	and
 	\begin{align*} 
 	\frac{d}{dt}\modpi(x) &=
 	\frac1{\modZ} \frac{d}{dt}\left(e^{-\mcH_{1,\alpha_t,c}^f(x)}\right)+e^{-\mcH_{1,\alpha_t,c}^f(x)}\frac{d}{dt}\left( \frac1{\modZ}\right)\\ 
 	&=G_{1,\alpha_t,c}^f(x)\cdot \modpi(x)-e^{-\mcH_{1,\alpha_t,c}^f(x)} \left( \frac1{\modZ}\right)^2 \sum_{x \in \mathcal{X}}   e^{-\modH(x)}\cdot G_{1,\alpha_t,c}^f(x)\\
 	&=G_{1,\alpha_t,c}^f(x)\cdot \modpi(x)-\modpi(x)    \sum_{x \in \mathcal{X}}  \modpi(x)  \cdot G_{1,\alpha_t,c}^f(x),
 	\end{align*}
 	which implies
 	\begin{align*}
 	\left| \frac{d}{dt}\modpi(x)  \right|
 	&\leq \modpi(x) \left(\max_x G_{1,\alpha_t,c}^f(x) - \min_x G_{1,\alpha_t,c}^f(x)\right) \notag\\
	&\leq -\alpha_t'f(\mcH_{\max}-c)(\mcH_{\max}-\mcH_{\min})\cdot \modpi(x).
 	\end{align*}
%  	\begin{align}
% 	\left| \frac{d}{dt}\modpi(x)  \right|
% 	&\leq \left|G_{1,\alpha_t,c}^f(x)\cdot \modpi(x)\right|+\left|-\modpi(x)    \sum_{x \in \mathcal{X}}  \modpi(x)  \cdot G_{1,\alpha_t,c}^f(x)\right|\notag\\
% 	&\leq  -\alpha_t'f(\mcH_{\max}-c)(\mcH_{\max}-\mcH_{\min})\modpi(x)\cdot\left( 1+ \sum_{x\in\mathcal{X}}\modpi(x) \right)\notag\\
% 	&{=} -2\alpha_t'f(\mcH_{\max}-c)(\mcH_{\max}-\mcH_{\min})\cdot \modpi(x).\label{eq:bounddpi}
% \end{align}
 	Then, \eqref{eq:conditionBounddpi} is satisfied with 
 	$\gamma_t=-\alpha_t'f(\mcH_{\max}-c)(\mcH_{\max}-\mcH_{\min})$  and \eqref{eq:conditionGammaSpegap} is satisfied since $ \alpha'_{t} \to 0$ as $t\to \infty$ and the time-independent lower bound of the spectral gap as in Corollary \eqref{cor:spectralgap}. Thus, \eqref{eq:TVmixingMod} is proved by applying \cite[Theorem A]{Gidas85}.

 \subsection{Proof of Theorem \ref{thm:TVmixingStationary}}\label{subsec:Proof_TVmixingStationary} 
 	Using the triangle inequality, we have\[
 \left\|\left(\modP\right)^t(x,\cdot)-\mu  \right\|_{TV}\leq \left\|\left(\modP\right)^t(x,\cdot)-\modpi\right\|_{TV}+\|\modpi-\mu  \|_{TV}\to 0\text{ as } t\to\infty	\]where the first term converges to $0$ due to Theorem \ref{thm:TVmixingMod}, and we recall from Theorem \ref{thm:ConvergenceTradeoff} that the second term is $$\|\modpi-\mu  \|_{TV}=\mathcal{O}(\alpha_t).$$
\subsection{Proof of Theorem \ref{thm:Mixingx2}}
\label{subsec:Proof_Mixingx2} 
We follow the same roadmap as the proof of \cite[Appendix A.1]{DM94}. First, using the Cauchy-Schwartz inequality, we have
\begin{align*} 
\left\|\left(\modP \right)^t(x,\cdot)-\mu\right\|_{TV}
&\notag=\frac12 \sum_{y \in \mathcal{X}}\left|\left(\modP\right)^t(x,y)-\mu(y)\right|\\
&\leq\frac12 \textcolor{black}{\sqrt{\sum_{y \in \mathcal{X}}\left(\frac{\left(\modP\right)^t(x,y)-\mu(y)}{\modpi(y)} \right)^2\modpi(y)} =:\frac12 \sqrt{u_t(x)}}
\end{align*}
where  
\[
u_t(x):=\sum_{y \in \mathcal{X}} \left(g_t(x,y)-\frac{\mu(y)}{\modpi(y)}\right)^2\modpi(y),\quad g_t(x,y):=\frac{\left(\modP\right)^t(x,y)}{\modpi(y)}.
\]

	Next, we calculate the time derivative of $u_t$:
	\begin{align}\notag
	\frac{du_t(x)}{dt}
	&=\frac{d}{dt}\left( \sum_{y\in\mathcal{X}}\left(g_t(x,y)-\frac{\mu(y)}{\modpi(y)}\right)^2\modpi(y) \right) \\\notag
	&=\sum_{y \in \mathcal{X}} 2\left(g_t(x,y)-\frac{\mu(y)}{\modpi(y)}\right)\left(\modpi(y)\frac{dg_t(x,y)}{dt}+ \frac{\mu(y) }{\modpi(y)}\frac{d\modpi(y)}{dt}\right)\\\notag
	&\qquad \qquad \qquad\qquad +\sum_{y\in\mathcal{X}}\left( g_t(x,y)-\frac{\mu(y)}{\modpi(y)}\right)^2\frac{d\modpi(y)}{dt}\\\notag
	&=\sum_{y \in \mathcal{X}}2\left(g_t(x,y)-\frac{\mu(y)}{\modpi(y)}\right)\left( \modM g_t(x,y) -\frac{g_t(x,y)}{\modpi(y)}\frac{d\modpi(y)}{dt}\right)\modpi(y)\\\notag
	&\qquad +\sum_{y \in \mathcal{X}}2\left(g_t(x,y)-\frac{\mu(y)}{\modpi(y)}\right)\frac{\mu(y)}{\modpi(y)}\cdot\frac{d\modpi(y)}{dt} +\sum_{y \in \mathcal{X}}\left(g_t(x,y)-\frac{\mu(y)}{\modpi(y)}\right)^2\frac{d\modpi(y)}{dt} \\ \label{eq:du2t}
	&=-2\langle -\modM g_t,g_t\rangle_{\modpi}-\sum_{y \in \mathcal{X}}\left(g_t(x,y)-\frac{\mu(y)}{\modpi(y)} \right)^2\frac{d\modpi(y)}{dt},
	\end{align}  
	where the third equality follows from \cite[Lemma A.1.1]{DM94} and the reversibility of $M^f_{1,\alpha_t,c}$ with respect to $\modpi$.
	
	In the special case where $ f(x)=x^2$, $\modcH$ is computed as in \eqref{eq:Hal_x2}, we can then calculate the derivative of $\modpi$ with respect to $\beta = 1/\alpha$: 
	\begin{align} 
	\frac{d}{d\beta}\modcpi(y) 
	=\modcpi(y)\left(-\frac{d\modcH(y)}{d\beta} \right)
	-\modcpi(y)\sum_{x \in \mathcal{X}} \modcpi(x)\left(- \frac{d\modcH(x)}{d\beta} \right)\label{eq:dmodpitobeta}
	\end{align}
	with 
	$$
	-\frac{d\modcH(y)}{d\beta}  =\left\{\begin{array}{ll}
	0, &H(y) \leqslant c, \\  \dfrac{  \mcH(y)-c}{2(\mcH(y)-c)^2+2\beta }-\dfrac{\text{arctan} \left(\sqrt{\frac1\beta}\left(
		\mcH(y)-c\right)\right)}{2\sqrt{\beta}} ,& H(y)>c.
	\end{array}\right.
	$$
	We also have the bounds that
	\[
	\frac{-\pi}{4\sqrt{\beta}}\leq -\frac{d\modcH(y)}{d\beta}\leq \frac{1}{2\beta}\left(\mcH_{\max}-\mcH_{\min} \right),
	\] 
	which when combine with \eqref{eq:dmodpitobeta} implies 
	\begin{align}\label{eq:Bounddlnmodpi2beta}
	\left|\frac{d}{d\beta}\ln \modcpi(y) \right| \leq  \max_{y\in\mathcal{X}}\left(-\frac{d\modcH(y)}{d\beta}\right)  - \min_{x \in \mathcal{X}}   \left(-\frac{d\modcH(x)}{d\beta}\right) = \mathcal{O}\left(\frac{1}{\sqrt{\beta}}\left(\mcH_{\max}-\mcH_{\min} \right)\right).
	\end{align}
	
	With the choice of $K, \beta_t$ and $\mathcal{M}$ in the theorem, we have
	$$Ke^{-\frac\pi2\beta_t}-\frac{\mathcal{M}}{2}\frac{d\beta_t}{dt} = \mathcal{M} \dfrac{d \beta_t}{dt},$$
	and using \eqref{eq:du2t} and \eqref{eq:conditionK} give
	\begin{equation*}\label{eq:boundu2t}
	\frac{du_t(x)}{dt} \leq -2\left(Ke^{-\frac\pi2\beta_t}-\frac{\mathcal{M}}{2}\frac{d\beta_t}{dt} \right)u_t(x)=-2\mathcal{M}\cdot\frac{d\beta_t }{dt}\cdot u_t(x)
	\end{equation*}
	which implies that 
	\begin{equation*}\label{eq:boundu}
	u_t(x)\leq u_0(x)e^{-2\mathcal{M}(\beta_t - 1)}  
	\end{equation*} 
	with  \begin{align*}
	u_{0}(x)
	&=\left(\frac{1-\mu(x)}{\mu_{1, \alpha_{0}, c}^{f}(x)}\right)^{2} \mu_{1, \alpha_{0}, c}^{f}(x)
	+\sum_{y \neq x}\left(\frac{\mu(y)}{\mu_{1, \alpha_{0}, c}^{f}(y)}\right)^{2} \mu_{1, \alpha_{0}, c}^{f}(y)\\
	&\leq \frac{1}{\min_{x \in \mathcal{X}}\mu_{1,\alpha_0,c}^f(x)}+ \max_{x\in\mathcal{X}}\left(\frac{\mu(x)}{{\mu}_{1, \alpha_{0}, c}^{f}(x)}\right)^2\\
	&\leq e^{c-\mcH_{\min}+\frac{1}{\sqrt{\alpha_{0}}\cdot }\frac{\pi}{2}}+ \left( \frac{Z_{1, \alpha_{0}, c}^{f}}{Z^0}\right)^2
	\leq \mathcal{O}(|\mathcal{X}|^2),
	\end{align*}
	\textcolor{black}{where the last inequality follows from the assumption that $c - \mathcal{H}_{\min} = \delta = \mathcal{O}(1)$.} The desired result follows.
 
\section{Applications}\label{sec:apps}

In this Section, we shall apply the \textit{model-independent} mixing time results that we obtain in Section \ref{sec:constantpenalty} and Section \ref{sec:timedpenalty} to some common statistical physics models where the Glauber dynamics is utilized for sampling from the target Gibbs distribution $\mu$. In particular, we will be interested in comparing the following mixing parameters: for any $\eta, \varepsilon > 0$, $g\in\ell^2(\mu)$
\begin{itemize}
	\item (Large deviation from the target mean) Recall that in \eqref{eq:conctime}, we introduce
	$$t_{conc}(\eta, \varepsilon) = \inf \bigg\{t \geq 0;~ 	\sup_{g \in \ell^2(\mu);~\norm{g}_{\infty} \leq a} \sup_{x \in \mathcal{X}}\mP_x\left( \dfrac{\int_0^t w(\miY) g(\miY) \, ds}{\int_0^t w(\miY) \, ds} -  \mu(g) \geq \eta \right) \leq \varepsilon\bigg\}.$$
	
	\item (Total variation mixing time of the modified dynamics with $f(x) = x^2$ and logarithmic annealing schedule of the penalty parameter) Recall that in \eqref{eq:nonhomotmix}, we introduce
	$$t_{mix}^f(\varepsilon) = \inf \bigg\{t \geq 0;~ 	\max_{x \in \mathcal{X}} \left\|\left(\modP\right)^t(x,\cdot)-\mu  \right\|_{TV} \leq \varepsilon\bigg\}.$$
	
	\item (Total variation mixing time of the original dynamics)
	Recall that in \eqref{eq:mixingtime}, we define the total variation mixing time of the MH dynamics that target $\mu$ to be
	$$t_{mix}^0(\varepsilon) = \inf \bigg\{t \geq 0;~ 	\max_{x \in \mathcal{X}} \left\|\left(P^0\right)^t(x,\cdot)-\mu  \right\|_{TV} \leq \varepsilon\bigg\}.$$
\end{itemize}

We shall investigate the ferromagnetic Ising model on the $n$-dimensional hypercube in Section \ref{subsec:app_hypercube}, the ferromagnetic Ising model on the complete graph in Section \ref{subsec:app_complete}, and finally the $q$-state Potts model on the two-dimensional torus in Section \ref{subsec:app_Potts}.

\subsection{Ferromagnetic Ising model on the $n$-dimensional hypercube}\label{subsec:app_hypercube}

In this Section, we specialize into the case of the Ising model on a finite connected graph $G = (V,E)$. Let $\mathcal{X} = \{-1,1\}^V$ be the set of possible configurations $\sigma = \{\sigma(v);~v \in V\}$ in which we associate each vertex $v \in V$ with a spin value $\sigma(v) \in \{-1,1\}$. The Hamiltonian is given by
\begin{align}\label{eq:Hamhypercube}
	\mcH(\sigma) = - \dfrac{J}{2} \sum_{(v,w) \in E} \sigma(v) \sigma(w) - \dfrac{h}{2} \sum_{v \in V} \sigma(v),
\end{align}
where $J = \mathcal{O}(1) > 0$ is the ferromagnetic pair potential and $h = \mathcal{O}(1) > 0$ is the external magnetic field. We also denote by $\mathbf{+1}$ (resp.~ $\mathbf{-1}$) to be the all-plus (resp.~all-minus) configuration, where we recall that $\mathbf{+1}$ is the ground-state or stable configuration while $\mathbf{-1}$ is the metastable configuration of $\eqref{eq:Hamhypercube}$. 

For the Glauber dynamics, we consider a simple random walk proposal with transition matrix $P^{SRW} = (P^{SRW}(\sigma_1,\sigma_2))_{\sigma_1,\sigma_2 \in \mathcal{X}}$ given by
\begin{align*}
	P^{SRW}(\sigma_1,\sigma_2) &:= \dfrac{1}{n} \1_{\{\textrm{there exists } i \textrm{ such that } \sigma_1(i) = -\sigma_2(i) \textrm{ and } \sigma_1(j) = \sigma_2(j) \textrm{ for all }j \neq i\}}.
\end{align*}
The stationary distribution of $P^{SRW}$ is the uniform distribution on $\mathcal{X}$. 

%\subsubsection{Improved mixing of the Glauber dynamics on the $n$-dimensional hypercube}

\begin{theorem}\label{thm:MixingGlauber_hypercube}
	Suppose that $G = (V,E)$ is an $n$-dimensional hypercube. Consider the Glauber dynamics on $\mathcal{X} = \{-1,1\}^V$ with proposal generator $P^{SRW} - I$, target distribution $\modcpi$ with Hamiltonian given in \eqref{eq:Hamhypercube}, $f(x) = x^2$ and $c = \mcH(\mathbf{+1}) + \delta$, where $\delta > 0$. Given $\eta, \varepsilon > 0$, we have
	\begin{enumerate}
		\item\label{it:hypercube1}  for $g \in \ell^2(\mu)$ such that $\norm{g}_{\infty} \leq a$,
		$$t_{conc}(\eta, \varepsilon) = \mathcal{O}\left(n^2 2^n e^{3 \delta + \frac{3}{\sqrt{\alpha}}\frac{\pi}{2}} \dfrac{a^2(1+\eta)^2}{\eta^2} \ln \left(\dfrac{1}{\varepsilon}\right)\right).$$
		
		\item\label{it:hypercube2}[Rapid mixing: mixing time is at most polynomial in the number of vertices in the modified Glauber dynamics with time-dependent penalty] Assume the logarithmic annealing schedule $\alpha_t = \frac{3 \pi}{2} \dfrac{1}{\ln (e^{\frac{3\pi}{2}}+p t)}$, where $p$ is chosen as in Theorem \ref{thm:Mixingx2}, then
		\textcolor{black}{$$t_{mix}^f(\varepsilon) = \mathcal{O}\left(n^2 2^n e^{3 \delta} \left(\dfrac{1}{\varepsilon}\right)^{\frac{3\pi}{2n \ln 2}}\right).$$}
		
		\item\label{it:hypercube3}[Torpid mixing: mixing time is at least exponential in the number of vertices in the original Glauber dynamics]
		Assume in addition that $0 \leq h/J \leq n$. For constant $\mathbb{M} = \mathbb{M}(J,h)$ that does not depend on $n$,
		$$t_{mix}^0(\varepsilon) = \Omega\left(e^{\mathbb{M} 2^n}\ln \left(\dfrac{1}{2 \varepsilon}\right)\right).$$
	\end{enumerate}
\end{theorem}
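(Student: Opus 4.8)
The plan is to apply the three model-independent results—Theorem \ref{thm:concentrationChernoff}, Theorem \ref{thm:Mixingx2}, and a known torpid-mixing bound—to the specific Ising model on the $n$-dimensional hypercube, so the core task is to estimate the relevant geometric quantities: $|\mathcal{X}| = 2^n$, $\mcH_{\max} - \mcH_{\min}$, the reduced critical height, and the original critical height $m^0$. For item \eqref{it:hypercube1}, I would substitute $|\mathcal{X}| = 2^n$ into the $t_{conc}$ bound of Theorem \ref{thm:concentrationChernoff} and bound $\exp\{\max_{\sigma}\modcH(\sigma)\}$ using the key estimate from Section \ref{subsec:quadraticf}: with $f(x) = x^2$ and $c = \mcH(\mathbf{+1}) + \delta = \mcH_{\min} + \delta$, we have $\max_\sigma \modcH(\sigma) \le \delta + \frac{\pi}{2\sqrt{\alpha}}$, which is independent of $\mcH_{\max}$. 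This directly yields the $e^{3\delta + \frac{3}{\sqrt{\alpha}}\frac{\pi}{2}}$ factor (the factor $3$ arising because the $t_{conc}$ bound carries a $|\mathcal{X}|\exp\{\max\modcH\}$ inside the logarithm's argument together with the $|\mathcal{X}|$ prefactor—I would need to check the bookkeeping so the powers match, but the structure is clear), plus the $n^2 2^n$ from $|\mathcal{X}|$ and the $\ell^\infty$-norm considerations.

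For item \eqref{it:hypercube2}, I would invoke Theorem \ref{thm:Mixingx2} with $|\mathcal{X}| = 2^n$, so $\ln|\mathcal{X}| = n\ln 2$. The crucial point is to verify the hypothesis \eqref{eq:conditionK}, i.e., $\lambda_2(-\modM) \ge K e^{-\frac{3\pi}{2}\sqrt{\beta_t}}$: by Lemma \ref{lem:spectralgap}, $\lambda_2(-\modM) \ge C e^{-m^f_{1,\alpha_t,c}}$, and by Remark \ref{rk:reducedchbound} combined with the Section \ref{subsec:quadraticf} bound, $m^f_{1,\alpha_t,c} \le \max_\sigma \modcH(\sigma) \le \delta + \frac{\pi}{2}\sqrt{\beta_t}$, which with $\delta = \mathcal{O}(1)$ gives exactly the required form with $K$ depending only on $\delta$ and $\mathcal{X}$-independent constants. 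Then I also need $\mcH_{\max} - \mcH_{\min} = \mathcal{O}(n)$—which holds since the hypercube has $n2^{n-1}$ edges and $2^n$ vertices, so $\mcH$ ranges over an interval of width $\mathcal{O}(n 2^n \cdot \text{const})$... wait, this needs care: actually $\mcH_{\max} - \mcH_{\min}$ is $\mathcal{O}(|E| + |V|) = \mathcal{O}(n 2^n)$, so $\mathcal{M} = C(n\ln 2 \vee \mathcal{O}(n2^n)) = \mathcal{O}(n 2^n)$. Plugging into the $t^f_{mix}$ formula from Theorem \ref{thm:Mixingx2} gives $\mathcal{O}\big(\frac{\mathcal{M}}{K}(1/\varepsilon)^{3\pi/(2\mathcal{M})}\big)$; since $\mathcal{M} = \Theta(n2^n)$ and the exponent $\frac{3\pi}{2\mathcal{M}}$ shrinks, while the prefactor should be reconciled with the claimed $n^2 2^n e^{3\delta}$—here I would track the $e^{\mathcal{M}}$ factor and the $u_0(x) = \mathcal{O}(|\mathcal{X}|^2)$ estimate carefully, using $\min_\sigma \modcpi(\sigma) \ge e^{-\max\modcH}/Z^f \ge $ (something $\mathcal{X}$-independent)$/|\mathcal{X}|$, which is where the $e^{3\delta}$ and $2^n$ enter rather than $e^{\mathcal{M}}$.

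For item \eqref{it:hypercube3}, I would cite the standard lower bound on the mixing time of Glauber dynamics for the ferromagnetic Ising model on the hypercube: the bottleneck ratio (conductance) argument using the set of configurations with negative magnetization shows $t^0_{mix}(\varepsilon) = \Omega(e^{m^0}\ln(1/(2\varepsilon)))$ via the relaxation-time lower bound $\lambda_2(-M^0)^{-1} \gtrsim e^{m^0}$ from Lemma \ref{lem:spectralgap} applied with $\alpha = 0$, and then estimating $m^0 = \Omega(2^n)$: the communication height between $\mathbf{-1}$ and $\mathbf{+1}$ requires passing through a configuration whose energy exceeds both endpoints by an amount proportional to the edge-boundary of a half-cube, which under $0 \le h/J \le n$ is $\Omega(2^n)$ (the minimal edge cut separating the two antipodal configurations in the hypercube grows like $2^{n-1}$). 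Setting $\mathbb{M} = \mathbb{M}(J,h)$ as the resulting constant completes the bound.

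\textbf{Main obstacle.} The hardest part is item \eqref{it:hypercube3}: pinning down that the original critical height $m^0$ grows like $\Omega(2^n)$ requires a genuine isoperimetric/min-cut estimate on the hypercube—showing that every path in configuration space from $\mathbf{-1}$ to $\mathbf{+1}$ must traverse a configuration with energy exceeding the ground state by $\Omega(2^n)$, uniformly over the allowed range $0 \le h/J \le n$ of the field. The positive-result items \eqref{it:hypercube1}–\eqref{it:hypercube2} are comparatively mechanical once the $\max_\sigma \modcH \le \delta + \frac{\pi}{2\sqrt{\alpha}}$ bound is in hand; the only subtlety there is carefully matching the constants and exponents (the factor $3$ in $e^{3\delta}$, the exponent $\frac{3\pi}{2n\ln 2}$) against the general formulas, which is routine bookkeeping rather than a conceptual difficulty.
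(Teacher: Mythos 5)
Your overall plan matches the paper's: plug the hypercube-specific quantities into Theorem \ref{thm:concentrationChernoff} and Theorem \ref{thm:Mixingx2}, and cite known torpid-mixing results for item \eqref{it:hypercube3}. The torpid-mixing item is handled in the paper exactly as you describe (relaxation-time lower bound plus a critical-height estimate $m^0 = \Omega(2^n)$, for which the paper cites \cite{J17,DHJN17} rather than re-deriving the isoperimetric fact).

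However, there is a genuine gap in your route for items \eqref{it:hypercube1}--\eqref{it:hypercube2}: you verify condition \eqref{eq:conditionK} via Lemma \ref{lem:spectralgap}, whose constants $C,\tilde C$ are only stated to depend on $\mathcal{X}$. For the hypercube $|\mathcal{X}| = 2^n$, so nothing prevents $C$ from being exponentially small in $n$, which would destroy the claimed polynomial bound. The paper instead proves a separate, explicit spectral-gap lower bound (Lemma \ref{lem:Glauberspectrapgap}) via a Holley--Stroock comparison lemma (Lemma \ref{lem:relationSpeGapL1L2}): comparing $\modcM$ to $P^{SRW}-I$ (whose gap is exactly $2/n$) gives
\[
\lambda_2(-\modcM) \geq \tfrac{2}{n}\,e^{-3\max_\sigma \modcH(\sigma)} \geq \tfrac{2}{n}\,e^{-3\delta - \frac{3\pi}{2\sqrt{\alpha}}},
\]
with the $n$-dependence fully under control. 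This lemma is the crucial ingredient missing from your proposal. It also explains the factor of $3$ that you attribute to the logarithm in the Chernoff bound: it actually comes from the $A\,e^{-2(\max U - \min U)}$ structure in the comparison lemma (one power from $A = e^{-\max\modcH}$ and two from the oscillation term), so your bookkeeping on that point is off. For item \eqref{it:hypercube1}, the $n^2$ prefactor then arises as $n/2$ (from the spectral gap) times an extra $n$ (from $\ln(|\mathcal{X}| e^{\max\modcH}/\varepsilon) = \mathcal{O}(n + \ln(1/\varepsilon))$), not from $\ell^\infty$-norm considerations; and for item \eqref{it:hypercube2} the explicit $K = \tfrac{2}{n} e^{-3\delta}$ is what feeds cleanly into Theorem \ref{thm:Mixingx2}.
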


\begin{rk}[On the choice of $c$]\label{rk:cchoicehypercube}
	We notice that in both item \eqref{it:hypercube1} and item \eqref{it:hypercube2}, the upper bounds there depend on $\delta$ through $\mathcal{O}(e^{3\delta})$. Ideally, the parameter $c$ should be chosen to be as close as $\mathcal{H}(\mathbf{+1})$ so that $\delta$ is small enough. We also utilize the fact that the ground-state configuration in this setting is the all-ones configuration $\mathbf{+1}$.
\end{rk}

We observe that the original Glauber dynamics has an exponentially large mixing time in the system size $2^n$, while the proposed annealing algorithm mixes rapidly in the sense that the mixing time is at most polynomial in $2^n$.

\subsubsection{Proof of Theorem \ref{thm:MixingGlauber_hypercube}}

In this subsection, we give the proof of Theorem \ref{thm:MixingGlauber_hypercube}. We first state a spectral gap lower bound result that will be used in subsequent sections:

\begin{lemma}\label{lem:Glauberspectrapgap}
	Given a graph $G = (V, E)$, consider the Glauber dynamics on $\mathcal{X} = \{-1,1\}^V$ with proposal generator $P^{SRW} - I$, target distribution $\modcpi$ and denote $n = |V|$, then 
	\begin{align*}
		\lambda_2(-\modcM) \geq \dfrac{2}{n} e^{-3 \max_{\sigma} \modcH(\sigma)}.
	\end{align*}
	In particular, if we take $f(x) = x^2$ and $c = \mcH(\mathbf{+1}) + \delta$, where $\delta > 0$, we have
	\begin{align*}
		\lambda_2(-\modcM) \geq \dfrac{2}{n} e^{- 3 \delta - \frac{3}{\sqrt{\alpha}}\frac{\pi}{2}}.
	\end{align*}
\end{lemma}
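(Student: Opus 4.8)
The plan is to bypass the abstract constant of Lemma~\ref{lem:spectralgap} and extract the explicit prefactor $2/n$ by a direct Dirichlet-form comparison between $\modcM$ and the simple random walk on the $n$-dimensional hypercube, whose spectral gap equals $2/n$ exactly. The first thing to record is that, \emph{whatever the structure of the underlying graph $G$}, the proposal generator $P^{SRW}-I$ on $\mathcal{X}=\{-1,1\}^V$ is precisely the single-spin-flip (continuous-time) simple random walk on the hypercube graph on $\{-1,1\}^V$ with $n=|V|$ coordinates: the edges of $G$ enter only through $\mcH$, hence through $\modcpi$ and the acceptance probabilities, never through the moves. The eigenvalues of $-(P^{SRW}-I)$ are $\{2k/n:\ k=0,\dots,n\}$, so its Poincar\'e constant with respect to the uniform measure $\mathrm{unif}$ on $\mathcal{X}$ is $2/n$.

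First I would write the gap as a Rayleigh quotient and, using reversibility of $\modcM$ with respect to $\modcpi$ and $\modcM(\sigma,\sigma')=\tfrac1n e^{-(\modcH(\sigma')-\modcH(\sigma))_+}$ for single-flip neighbours, compute the edge conductances
\[
\modcpi(\sigma)\,\modcM(\sigma,\sigma')=\frac{1}{n\,\modcZ}\,e^{-\max\{\modcH(\sigma),\modcH(\sigma')\}}\ \ge\ \frac{1}{n\,\modcZ}\,e^{-\max_{\tau}\modcH(\tau)},
\]
whereas the hypercube SRW has conductance $\tfrac1n\tfrac1{2^n}$ across each edge with respect to $\mathrm{unif}$. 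Comparing the two Dirichlet forms term by term and then invoking the hypercube Poincar\'e inequality gives, for every $g$,
\[
\langle -\modcM g, g\rangle_{\modcpi}\ \ge\ \frac{2^{n}e^{-\max_{\tau}\modcH(\tau)}}{\modcZ}\,\mathcal{E}_{\mathrm{SRW}}(g)\ \ge\ \frac{2^{n}e^{-\max_{\tau}\modcH(\tau)}}{\modcZ}\cdot\frac{2}{n}\,\mathrm{Var}_{\mathrm{unif}}(g).
\]
It then remains to transfer the variance back to $\modcpi$: choosing the constant $\mathrm{unif}(g)$ in the variational definition of $\mathrm{Var}_{\modcpi}$,
\[
\mathrm{Var}_{\modcpi}(g)\ \le\ \Big(\max_{\sigma}\tfrac{\modcpi(\sigma)}{\mathrm{unif}(\sigma)}\Big)\,\mathrm{Var}_{\mathrm{unif}}(g)\ =\ \frac{2^{n}}{\modcZ}\,e^{-\min_{\tau}\modcH(\tau)}\,\mathrm{Var}_{\mathrm{unif}}(g)\ =\ \frac{2^{n}}{\modcZ}\,\mathrm{Var}_{\mathrm{unif}}(g),
\]
the last equality using the crucial fact $\min_{\tau}\modcH(\tau)=0$: indeed $\modcH(\sigma)=\int_{\mcH_{\min}}^{\mcH(\sigma)}(\alpha f((u-c)_{+})+1)^{-1}\,du\ge 0$, with equality exactly at the $\mcH$-minimizer. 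Chaining the three displays, the factors $2^{n}$ and $\modcZ$ cancel, and since $\langle g,g\rangle_{\modcpi}=\mathrm{Var}_{\modcpi}(g)$ whenever $\modcpi(g)=0$, one gets $\lambda_2(-\modcM)\ge \tfrac2n e^{-\max_{\tau}\modcH(\tau)}$, which since $\max_{\tau}\modcH(\tau)\ge 0$ in particular yields the claimed bound $\tfrac2n e^{-3\max_{\tau}\modcH(\tau)}$. For the ``in particular'' part I would specialize to $f(x)=x^{2}$ and $c=\mcH(\mathbf{+1})+\delta$: since $\mathbf{+1}$ is the ground state we have $\mcH(\mathbf{+1})=\mcH_{\min}$, so $c-\mcH_{\min}=\delta$; substituting into the closed form \eqref{eq:Hal_x2} and using $\arctan\le\pi/2$ gives $\max_{\sigma}\modcH(\sigma)\le (c-\mcH_{\min})+\tfrac{1}{\sqrt{\alpha}}\tfrac{\pi}{2}=\delta+\tfrac{\pi}{2\sqrt{\alpha}}$, whence $\lambda_2(-\modcM)\ge\tfrac2n e^{-3\delta-\frac{3}{\sqrt{\alpha}}\frac{\pi}{2}}$.

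The step I expect to need the most care is the variance-transfer inequality together with the identification $\min_{\tau}\modcH(\tau)=0$: this is exactly what makes the (exponentially large) partition function $\modcZ$ and the factor $|\mathcal{X}|=2^{n}$ drop out, leaving a lower bound that depends only on $n=|V|$ and $\max_{\sigma}\modcH(\sigma)$ — the latter being, for quadratic $f$ and $\delta=\mathcal{O}(1)$, controllable independently of the system size thanks to the $\arctan$ saturation. The only other point to be careful about is making the term-by-term Dirichlet-form comparison rigorous, which is routine once one notes that the proposal graph is the full single-flip hypercube regardless of $G$.
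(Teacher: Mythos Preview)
Your argument is correct and is, at heart, the same comparison-to-SRW strategy the paper uses: the paper invokes the Holley--Stroock stability lemma (Lemma~\ref{lem:relationSpeGapL1L2}) with $L_1=\modcM$, $L_2=P^{SRW}-I$, $U=\modcH$ and $A=e^{-\max_\sigma\modcH(\sigma)}$, together with the hypercube gap $\lambda_2(-(P^{SRW}-I))=2/n$. The difference is that you carry out the comparison directly at the level of \emph{conductances} $\modcpi(\sigma)\modcM(\sigma,\sigma')=\frac{1}{n\modcZ}e^{-\max\{\modcH(\sigma),\modcH(\sigma')\}}$ rather than separating a generator bound $\modcM\ge A(P^{SRW}-I)$ from a measure bound $\modcpi\ge \frac{e^{-\max\modcH}}{\modcZ}\,\mathrm{unif}$; this exploits the Metropolis structure and, combined with $\min_\tau\modcH(\tau)=0$, yields the sharper inequality $\lambda_2(-\modcM)\ge\frac{2}{n}e^{-\max_\sigma\modcH(\sigma)}$, whereas the black-box Holley--Stroock application gives only $\frac{2}{n}e^{-3\max_\sigma\modcH(\sigma)}$. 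As you observe, your stronger bound immediately implies the stated one. The specialization to $f(x)=x^2$ and $c=\mcH(\mathbf{+1})+\delta$ via \eqref{eq:Hal_x2} and $\arctan\le\pi/2$ is identical to the paper's.
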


%We proceed to give a proof of Lemma \ref{lem:Glauberspectrapgap}.

\begin{proof}[Proof of Lemma \ref{lem:Glauberspectrapgap}]
	To prove the desired result, we shall invoke the following classical stability result of the spectral gap by \cite{HS87}:
	\begin{lemma}\label{lem:relationSpeGapL1L2}
		Let $(\mathcal{X}, d, \mu)$ be a finite metric measure space and suppose $d \nu=\frac{e^{-U(x)}}{Z} d \mu$. Suppose that $L_1$ is a reversible generator with respect to $\nu$, and that $L_2$ is another reversible generator with respect to $\mu .$ Suppose furthermore that $A L_2(x, y) \leq L_1(x, y)$ for all $x,y \in \mathcal{X}$. Then if $L_2$ has spectral gap $\lambda_2(-L_2)$, the spectral gap of $L_1$ satisfies
		$$
		A e^{-2(\max U-\min U)} \lambda_2(-L_2) \leq \lambda_2(-L_1).
		$$
	\end{lemma}
	The desired result follows if we take $L_1 =\modcM$, $L_2 = P^{SRW}-I$ and $A = \exp\bigg\{-\max_{\sigma}\modcH(\sigma) \bigg\},$
	and recall that in \cite[Example $3.2$]{DSC96} the spectral gap of $P^{SRW}$ is given by $\lambda_2(-(P^{SRW}-I)) = 2/n$. 
	
	In the special case when $f(x) = x^2$ and $c = \mcH(\mathbf{+1}) + \delta$, we utilize Section \ref{subsec:quadraticf} and the fact that $\sup_x \arctan(x) = \pi/2$.
	
	%	The above computation leads to
	%	\begin{align*}
		%	\lambda_2(-\modcM) &\geq 2 N^{-1} e^{-3(\max_{\sigma \in \Sigma_N} \mcH^{f_{2}}_{T_t,c_N}(\sigma) - \min_{\sigma \in \Sigma_N} \mcH^{f_{2}}_{T_t,c_N}(\sigma))} \\
		%	&\geq 2 N^{-1} e^{-\beta_t 3 \left(2 \epsilon_N + \frac{\pi}{2}\right)}.
		%	\end{align*}
\end{proof}

We now give the proof of Theorem \ref{thm:MixingGlauber_hypercube}.

\begin{proof}[Proof of Theorem \ref{thm:MixingGlauber_hypercube}]
	First, we prove item \eqref{it:hypercube1}. The desired result follows from Theorem \ref{thm:concentrationChernoff} and the spectral gap lower bound Lemma \ref{lem:Glauberspectrapgap} as well as the inequality that
	$$\dfrac{\modcZ}{Z^0} \leq \modcZ \leq 2^n.$$
	
	Next, we prove item \eqref{it:hypercube2}, which simply follows from Theorem \ref{thm:Mixingx2} and the observation from Lemma \ref{lem:Glauberspectrapgap} that
	$$\mcH_{\max }-\mcH_{\min } = \mathcal{O}(n 2^n), \quad K = \dfrac{2}{n} e^{-3 \delta}.$$
	
	Finally, we prove item \eqref{it:hypercube3}. Using \cite[Lemma $20.11$]{LPW17}, we see that
	\begin{align*}
		t_{mix}^0(\varepsilon) \geq \dfrac{1}{\lambda_2(-M^0)} \ln \left(\dfrac{1}{2 \varepsilon}\right).
	\end{align*}
	According to \cite{J17,DHJN17}, the critical height $m^0$ is bounded below by
	$$m^0 \geq \mathbb{M} 2^n,$$
	and the desired result follows from \cite[Lemma $2.3$]{HS88} since 
	$$\lambda_2(-M^0) = \mathcal{O}(e^{\mathbb{M} 2^n}/4^n) = \mathcal{O}(e^{\mathbb{M} 2^n}).$$
	Note that in the above equations the value of $\mathbb{M}$ may possibly be changed in each line.
\end{proof}

\subsection{Ferromagnetic Ising model on the complete graph}\label{subsec:app_complete}

In the second example, we consider the ferromagnetic Ising model on the complete graph $G = K_n$ with $n = |V|$ vertices. The Hamiltonian remains to be \eqref{eq:Hamhypercube}, except now the underlying graph structure is $K_n$.

\begin{theorem}\label{thm:MixingGlauber_complete}
	Suppose that $G = K_n$ is the complete graph on $n$ vertices. Consider the Glauber dynamics on $\mathcal{X} = \{-1,1\}^V$ with proposal generator $P^{SRW} - I$, target distribution $\modcpi$ with Hamiltonian given in \eqref{eq:Hamhypercube}, $f(x) = x^2$ and $c = \mcH(\mathbf{+1}) + \delta$, where $\delta > 0$. Given $\eta, \varepsilon > 0$, we have
	\begin{enumerate}
		\item\label{it:complete1}  For $g \in \ell^2(\mu)$ such that $\norm{g}_{\infty} \leq a$,
		$$t_{conc}(\eta, \varepsilon) = \mathcal{O}\left(n^2 2^n e^{3 \delta + \frac{3}{\sqrt{\alpha}}\frac{\pi}{2}} \dfrac{a^2(1+\eta)^2}{\eta^2} \ln \left(\dfrac{1}{\varepsilon}\right)\right).$$
		
		\item\label{it:complete2}[Rapid mixing: mixing time is at most polynomial in the number of vertices in the modified Glauber dynamics with time-dependent penalty] Assume the logarithmic annealing schedule $\alpha_t = \frac{3 \pi}{2} \dfrac{1}{\ln (e^{\frac{3\pi}{2}}+p t)}$, where $p$ is chosen as in Theorem \ref{thm:Mixingx2}, then
		\textcolor{black}{$$t_{mix}^f(\varepsilon) = \mathcal{O}\left(n^3 e^{3 \delta} \left(\dfrac{1}{\varepsilon}\right)^{\frac{3\pi}{2n \ln 2}}\right).$$}
		
		\item\label{it:complete3}[Torpid mixing: mixing time is at least exponential in the number of vertices in the original Glauber dynamics]
		Assume in addition that $h/J$ is not an integer. For constant $\mathbb{M} = \mathbb{M}(J,h)$ that does not depend on $n$,
		$$t_{mix}^0(\varepsilon) = \Omega\left(e^{\mathbb{M} n^2}\ln \left(\dfrac{1}{2 \varepsilon}\right)\right).$$
	\end{enumerate}
\end{theorem}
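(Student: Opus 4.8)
The argument runs in close parallel to the proof of Theorem~\ref{thm:MixingGlauber_hypercube}: the spectral gap lower bound of Lemma~\ref{lem:Glauberspectrapgap} is stated for an arbitrary finite graph, so only the genuinely model-dependent quantities need to be re-examined for $K_n$. Throughout one uses that $J,h>0$ forces $\mathbf{+1}$ to remain the ground state of the Hamiltonian~\eqref{eq:Hamhypercube} on $K_n$, so that $c=\mcH(\mathbf{+1})+\delta$ is an admissible threshold, $\min_x\modcH(x)=0$, and $|\mathcal{X}|=2^n$ exactly as in the hypercube case.

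\emph{Item~\eqref{it:complete1}.} The plan is to apply Theorem~\ref{thm:concentrationChernoff} directly, feeding in the spectral gap estimate $\lambda_2(-\modcM)\geq\tfrac{2}{n}e^{-3\delta-\frac{3}{\sqrt{\alpha}}\frac{\pi}{2}}$ from Lemma~\ref{lem:Glauberspectrapgap} --- valid here because, with $f(x)=x^2$ and the above $c$, Section~\ref{subsec:quadraticf} gives $\max_\sigma\modcH(\sigma)\leq\delta+\tfrac{1}{\sqrt{\alpha}}\tfrac{\pi}{2}$, a bound insensitive to the graph --- together with $\modcZ/Z^0\leq\modcZ\leq|\mathcal{X}|=2^n$. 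Both inputs coincide with those used for the hypercube, so the resulting estimate is identical to item~\eqref{it:hypercube1}.

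\emph{Item~\eqref{it:complete2}.} Here I would invoke Theorem~\ref{thm:Mixingx2} with the prescribed logarithmic schedule on $\alpha_t$, which needs two model-dependent facts. First, since $K_n$ has $\binom{n}{2}$ edges while the field term contributes only $\mathcal{O}(n)$, one has $\mcH_{\max}-\mcH_{\min}=\mathcal{O}(n^2)$, hence $\mathcal{M}=C\big((\ln|\mathcal{X}|)\vee(\mcH_{\max}-\mcH_{\min})\big)=\mathcal{O}(n^2)$. Second, Lemma~\ref{lem:Glauberspectrapgap} shows the constant $K$ in~\eqref{eq:conditionK} may be taken as $\tfrac{2}{n}e^{-3\delta}$. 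Substituting into the $t_{mix}^f$ bound of Theorem~\ref{thm:Mixingx2}, and using $(\ln|\mathcal{X}|)\vee(\mcH_{\max}-\mcH_{\min})\geq\ln|\mathcal{X}|=n\ln 2$ to replace the denominator of the exponent by the smaller quantity $n\ln 2$, gives
\[
t_{mix}^f(\varepsilon)=\mathcal{O}\left(\frac{n^2}{\tfrac{2}{n}e^{-3\delta}}\left(\frac{1}{\varepsilon}\right)^{\frac{3\pi}{2n\ln 2}}\right)=\mathcal{O}\left(n^3e^{3\delta}\left(\frac{1}{\varepsilon}\right)^{\frac{3\pi}{2n\ln 2}}\right).
\]

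\emph{Item~\eqref{it:complete3}, and the main obstacle.} Following the hypercube proof, \cite[Lemma~20.11]{LPW17} gives $t_{mix}^0(\varepsilon)\geq\lambda_2(-M^0)^{-1}\ln(1/2\varepsilon)$, while \cite[Lemma~2.3]{HS88} turns a lower bound on the original critical height $m^0$ into $\lambda_2(-M^0)=\mathcal{O}\big(e^{-m^0}\,\mathrm{poly}(|\mathcal{X}|)\big)$, so that $t_{mix}^0(\varepsilon)=\Omega\big(e^{\mathbb{M}n^2}\ln(1/2\varepsilon)\big)$ once $m^0\geq\mathbb{M}n^2$ is in hand (polynomial factors being absorbed into $\mathbb{M}$). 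The substantive step is this lower bound on $m^0$. On $K_n$ the energy depends only on the magnetization $m=\sum_v\sigma(v)$, and as a function of $m$ the Hamiltonian is a downward parabola peaked near $m=-h/J$; hence any simple-random-walk proposal path from $\mathbf{-1}$ to $\mathbf{+1}$ moves $m$ in steps of size $2$ and must traverse the lattice site closest to $-h/J$, so evaluating the parabola there and using $\min_z\mcH(z)=\mcH(\mathbf{+1})$ gives $m^0\geq H(\mathbf{-1},\mathbf{+1})-\mcH(\mathbf{-1})$, which is $\Omega(n^2)$. The hypothesis that $h/J$ is not an integer is the non-degeneracy condition that prevents $-h/J$ from lying exactly midway between two lattice sites, so that the saddle is attained at a single configuration and the constant $\mathbb{M}(J,h)$ is cleanly identified, as in the metastability analysis of the mean-field Ising model. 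I expect this landscape computation, together with the bookkeeping of $\mathbb{M}(J,h)$ under the non-integrality assumption, to be the only non-routine part; items~\eqref{it:complete1} and~\eqref{it:complete2} are direct specializations of the model-independent Theorems~\ref{thm:concentrationChernoff} and~\ref{thm:Mixingx2}.
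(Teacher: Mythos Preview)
Your proposal is correct and follows essentially the same route as the paper. The only divergence is in item~\eqref{it:complete3}: the paper simply invokes \cite{DHJN17} for the critical height lower bound $m^0 \geq \mathbb{M} n^2$ (under the non-integrality assumption on $h/J$), whereas you sketch the mean-field parabola argument directly; the remaining steps via \cite[Lemma~20.11]{LPW17} and \cite[Lemma~2.3]{HS88} are identical.
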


\begin{rk}[On the choice of $c$]
	Similar to Remark \ref{rk:cchoicehypercube}, ideally the parameter $c$ should be chosen to be as close as $\mathcal{H}(\mathbf{+1})$ so that $\delta$ is small enough, since in both item \eqref{it:complete1} and item \eqref{it:complete2}, the upper bounds there depend on $\delta$ through $\mathcal{O}(e^{3\delta})$. 
\end{rk}

Similar to the previous example where we consider the ferromagnetic Ising model on the hypercube, we observe that the original Glauber dynamics has an exponentially large mixing time in the system size $n$, while the proposed annealing algorithm mixes rapidly with its mixing time being at most polynomial in $n$. As both the hypercube and the complete graph belong to the class of expander graphs, the original critical height grows at least linearly with $n$. The advantage of the proposed annealing algorithm is that the reduced critical height is bounded independently of $n$, and consequently this leads to rapid mixing.

\subsubsection{Proof of Theorem \ref{thm:MixingGlauber_complete}}

	The proof is similar to the proof of Theorem \ref{thm:MixingGlauber_hypercube}. First, we prove item \eqref{it:complete1}. Making use of Theorem \ref{thm:concentrationChernoff}, the spectral gap lower bound Lemma \ref{lem:Glauberspectrapgap} and the inequality that
	$$\dfrac{\modcZ}{Z^0} \leq \modcZ\leq 2^n,$$
	the desired result follows.
	
	Next, we prove item \eqref{it:complete2}, which readily follows from Theorem \ref{thm:Mixingx2} and  Lemma \ref{lem:Glauberspectrapgap} that
	$$\mcH_{\max }-\mcH_{\min } = \mathcal{O}(n^2), \quad K = \dfrac{2}{n} e^{-3 \delta}.$$
	
	Finally, we prove item \eqref{it:complete3}. Using \cite[Lemma $20.11$]{LPW17}, we see that
	\begin{align*}
		t_{mix}^0(\varepsilon) \geq \dfrac{1}{\lambda_2(-M^0)} \ln \left(\dfrac{1}{2 \varepsilon}\right).
	\end{align*}
	According to \cite{DHJN17}, the critical height $m^0$ is bounded below by
	$$m^0 \geq  \mathbb{M} n^2,$$
	and the desired result follows from \cite[Lemma $2.3$]{HS88} since 
	$$\lambda_2(-M^0) = \mathcal{O}(e^{\mathbb{M} n^2}/4^n) = \mathcal{O}(e^{\mathbb{M} n^2}).$$
	Note that in the above equations the value of $\mathbb{M}$ may possibly be changed in each line.

\subsection{$q$-state Potts model on the two-dimensional torus}\label{subsec:app_Potts}

The $q$-state Potts model is a generalization of the Ising model that we saw in the previous two sections, which is a spin system characterized by a set $S=\{1, \ldots, q\}$ of spin values on a finite graph $G=(V, E)$ with $q \in \mathbb{N}$. In this Section, we shall specialize into the two-dimensional torus $G = \left(\mathbb{Z}/ n\mathbb{Z}\right)^2$. Each configuration $\sigma \in \mathcal{X}=S^{V}$ assigns a spin value $\sigma(v) \in S$ to each vertex $v \in V$, and the Hamiltonian is given by
\begin{align}\label{eq:Ham_Potts}
	\mcH(\sigma):=-J \sum_{(v, w) \in E} \1_{\{\sigma(v)=\sigma(w)\}}, \quad \sigma \in \mathcal{X},
\end{align}
where $J = \mathcal{O}(1) > 0$ is the coupling constant. In subsequent analysis, we shall be interested in monochromatic configurations. Denote $\mathbf{m}$ to be the spin configuration in which $\sigma(v) = m$ for all $v \in V$, where $m \in S$. Unlike the previous two examples of ferromagnetic Ising model where $\mathbf{+1}$ is the only ground-state configuration, we note that the $q$ monochromatic configurations are the ground-state configurations in the $q$-state Potts model.

For the Glauber dynamics, we consider a simple random walk proposal with transition matrix $P^{Potts} = (P^{Potts}(\sigma_1,\sigma_2))_{\sigma_1,\sigma_2 \in \mathcal{X}}$ given by
\begin{align*}
	P^{Potts}(\sigma_1,\sigma_2) &:= \dfrac{1}{q |V|} \1_{\{|v \in V;~ \sigma_1(v) \neq \sigma_2(v)| = 1\}}.
\end{align*}
The stationary distribution of $P^{Potts}$ is the uniform distribution on $\mathcal{X}$. We now state the main result of this Section:

\begin{theorem}\label{thm:MixingGlauber_Potts}
		Suppose $G = \left(\mathbb{Z}/ n\mathbb{Z}\right)^2 = (V, E)$, consider the Glauber dynamics on $\mathcal{X} = \{1,2,\ldots,q\}^V$ with proposal generator $P^{Potts} - I$, target distribution $\modcpi$ with Hamiltonian given in \eqref{eq:Ham_Potts}, $f(x) = x^2$ and $c = \mcH(\mathbf{1}) + \delta$. Given $\varepsilon > 0$, we have
	\begin{enumerate}
%		\item\label{it:Potts1}  For $g \in \ell^2(\mu)$ such that $\norm{g}_{\infty} \leq a$,
%		$$t_{conc}(\eta, \varepsilon) = \mathcal{O}\left(n^2 2^n e^{3 \delta + \frac{3}{\sqrt{\alpha}}\frac{\pi}{2}} \dfrac{a^2(1+\eta)^2}{\eta^2} \ln \left(\dfrac{1}{\varepsilon}\right)\right).$$
		
		\item\label{it:Potts2}[Rapid mixing: mixing time is at most polynomial in $n$ in the modified Glauber dynamics with time-dependent penalty] Assume the logarithmic annealing schedule $\alpha_t = \frac{3 \pi}{2} \dfrac{1}{\ln (e^{\frac{3\pi}{2}}+p t)}$, where $p$ is chosen as in Theorem \ref{thm:Mixingx2}, then
		\textcolor{black}{$$t_{mix}^f(\varepsilon) = \mathcal{O}\left(n^4 e^{3 \delta} \left(\dfrac{1}{\varepsilon}\right)^{\frac{3\pi}{2n^2 \ln q}}\right).$$}
		
		\item\label{it:Potts3}[Torpid mixing: mixing time is at least exponential in $n$ in the original Glauber dynamics]
		Assume in addition that $2 \leq q \leq 4$. For constant $\mathbb{M} = \mathbb{M}(q)$ that does not depend on $n$,
		$$t_{mix}^0(\varepsilon) = \Omega\left(e^{\mathbb{M} n}\ln \left(\dfrac{1}{2 \varepsilon}\right)\right).$$
	\end{enumerate}
\end{theorem}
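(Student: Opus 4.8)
The plan is to follow the roadmap of the proofs of Theorems \ref{thm:MixingGlauber_hypercube} and \ref{thm:MixingGlauber_complete}: first establish a Potts analogue of the spectral-gap lower bound Lemma \ref{lem:Glauberspectrapgap}, then deduce the rapid-mixing statement \eqref{it:Potts2} from Theorem \ref{thm:Mixingx2}, and finally obtain the torpid-mixing statement \eqref{it:Potts3} from \cite[Lemma $20.11$]{LPW17}, \cite[Lemma $2.3$]{HS88}, and a known lower bound on the critical height $m^0$ of the two-dimensional $q$-state Potts model. The only genuinely new ingredient is the spectral-gap lemma, which I would prove first. The key observation is that $P^{Potts}$ coincides with the single-site Gibbs sampler for the \emph{uniform} product measure on $\mathcal{X} = S^V$: selecting a vertex $v$ uniformly and then resetting $\sigma(v)$ to a value drawn uniformly from $S$ (each of the $q$ values with probability $1/q$) reproduces both the off-diagonal entries $P^{Potts}(\sigma_1,\sigma_2) = \tfrac{1}{q|V|} \1_{\{|v \in V;~ \sigma_1(v) \neq \sigma_2(v)| = 1\}}$ and the holding probability $P^{Potts}(\sigma,\sigma) = 1/q$. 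Since each single-site ``uniform resampling'' kernel on $S$ has eigenvalues $1, 0, \dots, 0$, the tensor structure of $P^{Potts}$ gives $\lambda_2(-(P^{Potts}-I)) = 1/|V| = 1/n^2$. Feeding $L_1 = \modcM$, $L_2 = P^{Potts} - I$ and $A = \exp\{-\max_\sigma \modcH(\sigma)\}$ into Lemma \ref{lem:relationSpeGapL1L2} — the hypothesis $A\,(P^{Potts}-I)(\sigma_1,\sigma_2) \le \modcM(\sigma_1,\sigma_2)$ holding because $\modcH$ is non-negative — yields $\lambda_2(-\modcM) \ge \tfrac{1}{n^2} e^{-3 \max_\sigma \modcH(\sigma)}$. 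Finally, because the monochromatic configurations are ground states, the choice $c = \mcH(\mathbf{1}) + \delta$ forces $c - \mcH_{\min} = \delta$, so \eqref{eq:Hal_x2} together with $\arctan \le \pi/2$ gives $\max_\sigma \modcH(\sigma) \le \delta + \tfrac{\pi}{2\sqrt{\alpha}}$, whence $\lambda_2(-\modcM) \ge \tfrac{1}{n^2} e^{-3\delta - \frac{3}{\sqrt{\alpha}}\frac{\pi}{2}}$; along the annealing schedule this is exactly hypothesis \eqref{eq:conditionK} of Theorem \ref{thm:Mixingx2} with $K = \tfrac{1}{n^2} e^{-3\delta}$.

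Given this, item \eqref{it:Potts2} follows immediately from Theorem \ref{thm:Mixingx2}. The torus $\left(\mathbb{Z}/n\mathbb{Z}\right)^2$ has $|E| = 2n^2$ edges, so $\mcH_{\max} - \mcH_{\min} \le J|E| = \mathcal{O}(n^2)$, while $\ln|\mathcal{X}| = n^2 \ln q$; hence $(\ln|\mathcal{X}|) \vee (\mcH_{\max} - \mcH_{\min})$ is of order $n^2$, the prefactor in Theorem \ref{thm:Mixingx2} becomes $\tfrac{(\ln|\mathcal{X}|) \vee (\mcH_{\max} - \mcH_{\min})}{K} = \mathcal{O}(n^4 e^{3\delta})$, and the exponent of $1/\varepsilon$ is at most $\tfrac{3\pi}{2 n^2 \ln q}$, which for $\varepsilon < 1$ yields $t_{mix}^f(\varepsilon) = \mathcal{O}\big(n^4 e^{3\delta} (1/\varepsilon)^{3\pi/(2n^2 \ln q)}\big)$. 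One must also check the standing assumption $c - \mcH_{\min} = \delta = \mathcal{O}(1)$ required by Theorem \ref{thm:Mixingx2}, which again uses that $\mathbf{1}$ is a ground state.

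For item \eqref{it:Potts3} I would argue exactly as in the two Ising cases. By \cite[Lemma $20.11$]{LPW17}, $t_{mix}^0(\varepsilon) \ge \lambda_2(-M^0)^{-1} \ln \tfrac{1}{2\varepsilon}$; by \cite[Lemma $2.3$]{HS88}, $\lambda_2(-M^0)$ is bounded above by $e^{-m^0}$ up to a factor polynomial in the number of vertices $|V| = n^2$, so $t_{mix}^0(\varepsilon) = \Omega\big(e^{m^0} \ln \tfrac{1}{2\varepsilon}\big)$ after absorbing that polynomial factor into the exponent. It then remains to invoke a lower bound on the communication height of the $q$-state Potts model on $\left(\mathbb{Z}/n\mathbb{Z}\right)^2$: for $2 \le q \le 4$ the metastability literature provides $m^0 \ge \mathbb{M} n$ with a constant $\mathbb{M} = \mathbb{M}(q) > 0$ independent of $n$ — intuitively, tunnelling between two monochromatic ground states must pass through a configuration carrying an interface that winds around the torus, whose length is at least of order $n$. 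Substituting gives $t_{mix}^0(\varepsilon) = \Omega\big(e^{\mathbb{M} n} \ln \tfrac{1}{2\varepsilon}\big)$, where the value of $\mathbb{M}$ may change from line to line.

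I expect the main obstacle to lie not in the Markov-chain machinery — which is routine once Theorem \ref{thm:Mixingx2}, Lemma \ref{lem:spectralgap} and Lemma \ref{lem:relationSpeGapL1L2} are in place — but in the two external inputs: identifying $P^{Potts}$ with the uniform-product Gibbs sampler and pinning down its $1/n^2$ spectral gap, and locating and justifying the $\Omega(n)$ lower bound on the critical height $m^0$ of the two-dimensional $q$-state Potts model in the regime $2 \le q \le 4$.
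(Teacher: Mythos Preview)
Your proposal is correct and follows the paper's route almost exactly: establish a Potts analogue of Lemma~\ref{lem:Glauberspectrapgap} via Lemma~\ref{lem:relationSpeGapL1L2}, feed the resulting $K=\Theta(n^{-2}e^{-3\delta})$ into Theorem~\ref{thm:Mixingx2} for item~\eqref{it:Potts2}, and combine \cite[Lemma~20.11]{LPW17} with a spectral-gap upper bound for item~\eqref{it:Potts3}.

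Two minor deviations are worth flagging. First, for the proposal gap the paper quotes $\lambda_2\bigl(-(P^{Potts}-I)\bigr)=\tfrac{q}{(q-1)|V|}$ from \cite[Corollary~12.13]{LPW17} and \cite[Chapter~5, Example~9]{AF14}, whereas your direct tensor computation gives $1/|V|$; either constant yields $K=\Theta(n^{-2}e^{-3\delta})$, so item~\eqref{it:Potts2} is unaffected. Second --- and this addresses precisely the input you were unsure about --- the paper does \emph{not} pass through the critical height for item~\eqref{it:Potts3}: after \cite[Lemma~20.11]{LPW17} it appeals directly to \cite[Theorem~4]{GL18} for the spectral-gap upper bound $\lambda_2(-M^0)=\mathcal{O}(e^{-\mathbb{M} n})$ in the regime $2\le q\le 4$, bypassing both \cite[Lemma~2.3]{HS88} and any separate estimate on $m^0$.
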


\begin{rk}
	For $m \in \{1,\ldots,q\}$, we can alternatively take $c = \mcH(\textbf{m}) + \delta = \mcH(\textbf{1}) + \delta$, where we recall that there are $q$ ground-state configurations for the $q$-state Potts model.
\end{rk}

\subsubsection{Proof of Theorem \ref{thm:MixingGlauber_Potts}}

 We first state a spectral gap lower bound that generalizes Lemma \ref{lem:Glauberspectrapgap}:

\begin{lemma}\label{lem:Glauberspectrapgap_Potts}
	Given $G = \left(\mathbb{Z}/ n\mathbb{Z}\right)^2 = (V, E)$, consider the Glauber dynamics on $\mathcal{X} = \{1,2,\ldots,q\}^V$ with proposal generator $P^{Potts} - I$ and target distribution $\modcpi$, then 
	\begin{align*}
		\lambda_2(-\modcM) \geq \dfrac{q}{(q-1)|V|} e^{-3 \max_{\sigma}\modcH(\sigma)}.
	\end{align*}
	In particular, if we take $f(x) = x^2$ and $c = \mcH(\mathbf{1}) + \delta$, where $\delta > 0$, we have
	\begin{align*}
		\lambda_2(-\modcM) \geq \dfrac{q}{(q-1)|V|} e^{- 3 \delta - \frac{3}{\sqrt{\alpha}}\frac{\pi}{2}}.
	\end{align*}
\end{lemma}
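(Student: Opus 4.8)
The plan is to mirror the proof of Lemma \ref{lem:Glauberspectrapgap} verbatim, substituting the Potts proposal for the hypercube proposal. The key structural ingredient is again the stability result Lemma \ref{lem:relationSpeGapL1L2} of \cite{HS87}: we set $L_1 = \modcM$, $L_2 = P^{Potts} - I$, take $\mu$ to be the uniform distribution on $\mathcal{X} = \{1,\ldots,q\}^V$ (which is indeed stationary for $P^{Potts}$), and $d\nu = (e^{-\modcH(\sigma)}/\modcZ)\,d\mu$, so that $U = \modcH$ in the notation of that lemma. We then need a constant $A$ such that $A\,(P^{Potts}-I)(\sigma_1,\sigma_2) \leq \modcM(\sigma_1,\sigma_2)$ for all $\sigma_1,\sigma_2$. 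For $\sigma_1 \neq \sigma_2$, Definition \ref{def:MHpif} gives $\modcM(\sigma_1,\sigma_2) = P^{Potts}(\sigma_1,\sigma_2)\,e^{-(\modcH(\sigma_2)-\modcH(\sigma_1))_+} \geq P^{Potts}(\sigma_1,\sigma_2)\,e^{-\max_\sigma \modcH(\sigma)}$ since $\modcH \geq 0$; hence $A = e^{-\max_\sigma \modcH(\sigma)}$ works for the off-diagonal entries, and the diagonal entries are handled automatically as both generators have zero row sums. Plugging into Lemma \ref{lem:relationSpeGapL1L2} yields
\[
\lambda_2(-\modcM) \geq e^{-\max_\sigma \modcH(\sigma)}\, e^{-2(\max_\sigma \modcH(\sigma) - \min_\sigma \modcH(\sigma))}\, \lambda_2(-(P^{Potts}-I)).
\]
Since $\modcH(\mathbf{1}) = 0$ (as $\mcH_{\min}$ is attained at a monochromatic configuration when $c = \mcH(\mathbf{1})+\delta$ and $\modcH$ vanishes at the argmin of $\mcH$), we have $\min_\sigma \modcH(\sigma) = 0$, so the exponent collapses to $-3\max_\sigma \modcH(\sigma)$, giving the stated bound provided we also supply $\lambda_2(-(P^{Potts}-I)) \geq q/((q-1)|V|)$.

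The one genuinely new computation is the spectral gap of the bare proposal $P^{Potts} - I$. The chain $P^{Potts}$ is the lazy-type random walk on $S^V = \{1,\ldots,q\}^V$ that, at each step, picks a uniformly random vertex $v \in V$ and a uniformly random new spin value among all $q$ values (the factor $1/(q|V|)$ with the indicator $\{|\{v : \sigma_1(v)\neq\sigma_2(v)\}|=1\}$ encodes exactly: probability $1/|V|$ to pick vertex $v$, then probability $1/q$ to land on a specified differing spin). This is a product chain: $S^V$ with the product measure, and the generator decomposes as a sum over coordinates of single-site generators, each being $(1/|V|)$ times the generator of the complete-graph walk on $q$ states that resamples uniformly. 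The single-site resampling chain on $q$ states has spectral gap exactly $1$ at the level of its own generator $\tfrac{1}{q}(\mathbf{1}\mathbf{1}^\top) - I$ restricted appropriately — more precisely its nonzero eigenvalue is $-1$ with the uniform-resample kernel, and scaling by $1/|V|$ and using the tensorization property of the spectral gap for product chains (as in \cite[Example 3.2]{DSC96}) gives $\lambda_2(-(P^{Potts}-I)) = 1/|V| \cdot 1 = q/((q-1)|V|) \cdot \tfrac{q-1}{q}$ — here I should double-check the normalization: the single-site uniform-resample kernel $K(i,j) = 1/q$ has eigenvalues $1$ (once) and $0$ ($q-1$ times), so $K - I$ has eigenvalue $0$ once and $-1$ with multiplicity $q-1$; but $P^{Potts}$ at a single site, as embedded, only moves with probability $1/|V|$, and the off-diagonal rate to each of the $q-1$ other values is $1/(q|V|)$, so the single-site rate matrix has eigenvalue $-q/(q|V|) \cdot \tfrac{?}{}$... the careful bookkeeping here is the main obstacle, and one must be attentive to whether the ``stay'' probability $1/q$ in the resample is folded in. The cleanest route is: $P^{Potts} = I + \tfrac{1}{|V|}\sum_{v}(K_v - I)$ where $K_v$ resamples coordinate $v$ uniformly, so $-(P^{Potts}-I) = \tfrac{1}{|V|}\sum_v (I - K_v)$, a sum of commuting nonnegative operators; its smallest nonzero eigenvalue is $\tfrac{1}{|V|}$ times the smallest nonzero eigenvalue of a single $I - K_v$, which is $1$; thus $\lambda_2(-(P^{Potts}-I)) = 1/|V|$. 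To reconcile with the claimed $q/((q-1)|V|)$, one observes $q/(q-1) \geq 1$, so in fact the bound $\lambda_2(-(P^{Potts}-I)) \geq 1/|V|$ is \emph{weaker} than what is asserted; a sharper count of $I-K_v$ using that the walk can only reach $q-1$ states may be intended. I will therefore carry out the eigenvalue computation of $I - K_v$ explicitly (diagonalizing the $q\times q$ rank-one perturbation of the identity) and then apply tensorization, and then finish exactly as in the proof of Lemma \ref{lem:Glauberspectrapgap} using Section \ref{subsec:quadraticf} and $\sup_x \arctan(x) = \pi/2$ to obtain $\max_\sigma \modcH(\sigma) \leq \delta + \tfrac{1}{\sqrt\alpha}\tfrac{\pi}{2}$, hence the second displayed inequality.

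In summary: \textbf{Step 1}, identify the uniform distribution as stationary for $P^{Potts}$ and note $\modcH \geq 0$ with $\min_\sigma \modcH(\sigma) = 0$; \textbf{Step 2}, compute the off-diagonal comparison constant $A = e^{-\max_\sigma \modcH(\sigma)}$ between $\modcM$ and $P^{Potts}-I$; \textbf{Step 3}, compute $\lambda_2(-(P^{Potts}-I))$ via the product/tensorization structure and the single-site eigenvalues; \textbf{Step 4}, invoke Lemma \ref{lem:relationSpeGapL1L2} and simplify the exponent to $-3\max_\sigma\modcH(\sigma)$; \textbf{Step 5}, specialize to $f(x)=x^2$, $c = \mcH(\mathbf{1})+\delta$ using \eqref{eq:Hal_x2} to bound $\max_\sigma \modcH(\sigma) \leq \delta + \tfrac{\pi}{2\sqrt\alpha}$. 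The main obstacle is Step 3 — getting the constant in the proposal-chain spectral gap exactly right (the $q/((q-1)|V|)$ versus $1/|V|$ discrepancy), which requires care with the single-site kernel's normalization; everything else is a direct transcription of the hypercube argument.
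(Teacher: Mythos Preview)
Your approach is identical to the paper's: both invoke Lemma~\ref{lem:relationSpeGapL1L2} with $L_1=\modcM$, $L_2=P^{Potts}-I$, $A=e^{-\max_\sigma\modcH(\sigma)}$, then specialize via Section~\ref{subsec:quadraticf} and $\sup_x\arctan x=\pi/2$. The only difference is that the paper does not compute the proposal gap but simply cites \cite[Corollary~12.13]{LPW17} and \cite[Chapter~5 Example~9]{AF14} for the value $\lambda_2(-(P^{Potts}-I))=\tfrac{q}{(q-1)|V|}$.

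Your hesitation in Step~3 is well-founded and actually exposes a small inconsistency in the paper. With $P^{Potts}(\sigma_1,\sigma_2)=\tfrac{1}{q|V|}$ as defined (the uniform \emph{resample} chain), your decomposition $-(P^{Potts}-I)=\tfrac{1}{|V|}\sum_v(I-K_v)$ is correct, $K_v$ is the rank-one averaging operator on coordinate $v$ with eigenvalues $1$ and $0$, and tensorization gives spectral gap exactly $1/|V|$. The constant $\tfrac{q}{(q-1)|V|}$ corresponds instead to the chain with transition probability $\tfrac{1}{(q-1)|V|}$ that moves to a uniformly chosen \emph{different} spin; there the single-site kernel has nontrivial eigenvalue $-\tfrac{1}{q-1}$, hence $I-K$ has gap $\tfrac{q}{q-1}$. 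Either normalization yields the same polynomial dependence on $|V|$ in the downstream theorems, so your argument is complete modulo this harmless constant, and the discrepancy you flagged is the paper's, not yours.
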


\begin{proof}[Proof of Lemma \ref{lem:Glauberspectrapgap_Potts}]
	To prove the desired result, we again invoke Lemma \ref{lem:relationSpeGapL1L2}: the desired result follows if we take $L_1 = \modcM$, $L_2 = P^{Potts}-I$ and $A = \exp\bigg\{-\max_{\sigma}\modcH(\sigma) \bigg\}$ therein,
	and recall that in \cite[Corollary $12.13$]{LPW17} and \cite[Chapter $5$ Example $9$]{AF14} the spectral gap of $P^{Potts}$ can be computed to be $\lambda_2(-(P^{Potts}-I)) = \frac{q}{(q-1)|V|}$. 
	
	In the special case when $f(x) = x^2$ and $c = \mcH(\mathbf{1}) + \delta$, we make use of Section \ref{subsec:quadraticf} and  $\sup_x \arctan(x) = \pi/2$.
	
	%	The above computation leads to
	%	\begin{align*}
		%	\lambda_2(-\modcM) &\geq 2 N^{-1} e^{-3(\max_{\sigma \in \Sigma_N} \mcH^{f_{2}}_{T_t,c_N}(\sigma) - \min_{\sigma \in \Sigma_N} \mcH^{f_{2}}_{T_t,c_N}(\sigma))} \\
		%	&\geq 2 N^{-1} e^{-\beta_t 3 \left(2 \epsilon_N + \frac{\pi}{2}\right)}.
		%	\end{align*}
\end{proof}

We proceed to give the proof of Theorem \ref{thm:MixingGlauber_Potts}.

\begin{proof}[Proof of Theorem \ref{thm:MixingGlauber_Potts}]
	First, we prove item \eqref{it:Potts2}, which simply follows from Theorem \ref{thm:Mixingx2} and the observation from Lemma \ref{lem:Glauberspectrapgap_Potts} that
	$$\mcH_{\max }-\mcH_{\min } = \mathcal{O}(n^2), \quad K = \dfrac{q}{(q-1)4n^2} e^{-3 \delta}.$$
	
	Next, we prove item \eqref{it:Potts3}. Using \cite[Lemma $20.11$]{LPW17}, we see that
	\begin{align*}
		t_{mix}^0(\varepsilon) \geq \dfrac{1}{\lambda_2(-M^0)} \ln \left(\dfrac{1}{2 \varepsilon}\right).
	\end{align*}
	The desired result follows from \cite[Theorem $4$]{GL18} since 
	$$\lambda_2(-M^0) = \mathcal{O}(e^{\mathbb{M} n}).$$
\end{proof}

\section*{Acknowledgements}

The first author Michael Choi acknowledges the financial support from the startup grant and the special pocket research grant from Yale-NUS College, National University of Singapore. The second author Jing Zhang acknowledges the financial support from AIRS - Shenzhen Institute of Artificial Intelligence and Robotics for Society Project 2019-INT002, NSFC Project number 12001460, and from SRIBD - Shenzhen Research Institute of Big Data.

\bibliographystyle{abbrvnat}
\bibliography{thesis}

\end{document}